\DeclareSymbolFont{AMSb}{U}{msb}{m}{n}
\DeclareMathSymbol{\N}{\mathbin}{AMSb}{"4E}
\DeclareMathSymbol{\Z}{\mathbin}{AMSb}{"5A}
\DeclareMathSymbol{\R}{\mathbin}{AMSb}{"52}
\DeclareMathSymbol{\Q}{\mathbin}{AMSb}{"51}
\DeclareMathSymbol{\I}{\mathbin}{AMSb}{"49}
\DeclareMathSymbol{\C}{\mathbin}{AMSb}{"43}
\newcommand{\iso}{\cong}
\newcommand{\id}{\textnormal{id}}
\newcommand{\Hom}{\textnormal{Hom}}
\newcommand{\End}{\textnormal{End}}
\newcommand{\tn}[1]{\textnormal{#1}}
\newcommand{\cat}[1]{\tn{\textbf{#1}}}
\begin{document}


\newtheorem{thm}{Theorem}[section]
\newtheorem{cor}[thm]{Corollary}
\newtheorem{lem}[thm]{Lemma}
\newtheorem{pro}[thm]{Proposition}
\newtheorem{defi}[thm]{Definition}
\newtheorem{rem}[thm]{Remark}
\newtheorem{exa}[thm]{Example}
\newenvironment{proof}{\noindent \textbf{{Proof.}} \sf}

\def\contentsname{Index} \def\refname{References}

\def\qed{\hfill $\diamond$ \bigskip}

\newcommand\CC{{\mathbb{C}}} \newcommand\RR{{\mathbb{R}}} \newcommand\QQ{{\mathbb{Q}}} \newcommand\ZZ{{\mathbb{Z}}} \newcommand\NN{{\mathbb{N}}}

\def\A{{\mathcal A}} \def\B{{\mathcal B}} \def\C{{\mathcal C}} \def\D{{\mathcal D}} \def\E{{\mathcal E}} \def\F{{\mathcal F}} \def\I{{\mathcal I}} \def\J{{\mathcal J}} \def\K{{\mathcal K}} \def\L{{\mathcal L}} \def\U{{\mathcal U}} \def\Z{{\mathcal Z}} \def\M{{\mathcal M}} \def\N{{\mathcal N}}

\def\G{{\mathcal G}}
\def\H{{\mathcal H}}

\def\place{{-}}

\def\lim{\mathop{\rm lim}\nolimits} \def\min{\mathop{\rm min} \nolimits} \def\max{\mathop{\rm max} \nolimits} \def\Hom{\mathop{\sf Hom}\nolimits} \def\rad{\mathop{\rm rad}\nolimits} \def\Rad{\mathop{\rm rad}\nolimits} \def\End{\mathop{\rm End}\nolimits} \def\ann{\mathop{\rm ann}\nolimits} \def\Ext{\mathop{\sf Ext}\nolimits} \def\Tor{\mathop{\rm Tor}\nolimits} \def\soc{\mathop{\rm soc}\nolimits} \def\op{\mathop{\rm op}\nolimits} \def\Ker{\mathop{\rm Ker}\nolimits}  \def\Coker{\mathop{\rm Coker}\nolimits} \def\coker{\mathop{\rm Coker}\nolimits} \def\Im{\mathop{\rm Im}\nolimits} \def\dimQ{\mathop{\rm dim_\Q}\nolimits} \def\rank{\mathop{\rm rank}\nolimits} \def\Out{\mathop{\rm Out}\nolimits} \def\Int{\mathop{\rm Int}\nolimits} \def\mod{\mathop{\sf mod}\nolimits} \def\top{\mathop{\rm top}\nolimits} \def\Mod{\mathop{\rm Mod}\nolimits} \def\ind{\mathop{\rm ind}\nolimits} \def\add{\mathop{\rm add}\nolimits} \def\Add{\mathop{\rm Add}\nolimits} \def\tr{\mathop{\rm tr}\nolimits} \def\lamod{\mbox{{\rm mod}$\Lambda$}} \def\modla{\mbox{{\rm mod}$\Lambda$}} \def\laMod{\mbox{{\rm Mod}$\Lambda$}} \def\Fix{\mbox{{\rm Fix}}} \def\HH{\mbox{{\rm HH}}} \def\H{\mbox{{\rm H}}}

\def\ootimes{\mbox{\footnotesize$\otimes$}} \def\aut{\cat{Aut}} \def\Aut{\mathop{\bf Aut}\nolimits} \def\st{\mathsf{St}} \def\mor{\mathsf{Mor}}
\def\gal{\mathsf{Gal}} \def\cov{\mathsf{Cov}} \def\pr{\mathsf{pr}}
\def\galcov{\cat{GalCov}}
\def\ker{\mathop{\rm Ker}\nolimits}
\def\im{\mathop{\rm Im}\nolimits}


\sf

\title{Gradings, smash products and Galois coverings of a small category} \author{Claude Cibils and John MacQuarrie}

\date{}

\maketitle

\begin{abstract}
In this paper we develop the theory of coverings of a small connected category $\B$.  We show that the category of Galois coverings of $\B$ is equivalent to the category of Galois coverings of its fundamental groupoid.  Making use of effective gradings of $\B$ we explicitly construct Galois coverings through a smash product analogous to the one considered in the linear case, see \cite{CM,ciso}.  In particular, the universal  cover of $\B$ can be obtained from its fundamental groupoid.
\end{abstract}

\noindent 2010 MSC: 20L05, 18A32, 18A22

\section{\sf Introduction}

In this paper we consider a small connected category $\B$ and its category of Galois coverings.  A main purpose is to relate this category with other categories, in particular with ``effective gradings'' of $\B$.

The results we obtain are inspired by but are different from those in \cite{CRS,CRS2} for linear categories over a field $k$, namely  enriched categories over $k$-vector spaces. An important difference relies on the existence of groupoids in the context of small categories, that is categories where all the morphisms are invertible. There are no analogous structures available when considering $k$-categories.

We first recall in Section \ref{groupoid} the construction by P. Gabriel and M. Zisman \cite{GAZI} of a canonical groupoid associated to $\B$ - its ``category of fractions'' $Q\B$, whose objects are the same as the objects of $\B$.  In doing so, we make use of $\B$-free categories over a graph.  The category $Q\B$ is the quotient of a $\B$-free category by certain equivalence relations on the morphisms.  The category of fractions provides a functor from small categories to groupoids.  We make use of the canonical functor $Q_{\B}:\B\to Q\B$, which is the identity on objects but which in general is neither full nor faithful.

In \cite{QU}, D. Quillen defines the fundamental group of a small connected category $\B$ to be the fundamental group of a topological space closely related to $\B$, namely the geometric realization of its nerve.  P. Gabriel - M. Zisman and D. Quillen (\cite{GAZI,QU}) proved that the automorphism group of any chosen object of $Q\B$ is isomorphic to the fundamental group of $\B$ as defined.  In this paper we consider $Q\B$ to be the fundamental groupoid of $\B$, avoiding the choice of a base-point.

In Section \ref{coverings} we first recall the definition of a covering of a category and of a Galois covering, as well as some important properties.  Our first result is that $Q$ provides a functor from Galois coverings of $\B$ to Galois coverings of $Q\B$ which is an equivalence of categories.  For this purpose we give a canonical form for a Galois covering, namely any Galois covering is isomorphic to an orbit category by the free action of a group.

Coverings and universal covers of groupoids are considered by J.P. May (see \cite{May}), hence  the existence of a universal Galois covering of $\B$ is inferred from the above equivalence.  Note that universal coverings of $\B$ are known to exist by work of N. Ojeda Bar \cite{OJ} and K. Tanaka \cite{ta}, whose methods are closely related to one another but differ from ours.

For a $k$-category over a field $k$ presented by generators and relations, a theory of coverings has been developed (see \cite{bo,boga,ga,MP,le,le1,le2}).  By making use of linear group gradings of a $k$-category, all Galois coverings of the category can be described in an intrinsic way, see \cite{CRS,CRS2,CM}.  A fundamental group ``\`{a} la Grothendieck'' is associated to each $k$-category in much the same way as the fundamental group is used in algebraic geometry by A. Grothendieck and C. Chevalley (see for instance \cite{dodo}).  Note however that for a $k$-category there need not be a universal cover, nor a $k$-category of fractions.

For an abstract small category $\B$, a grading by a group $G$ is a disjoint union decomposition of each morphism set of $\B$ indexed by elements of $G$ such that composition in $\B$ agrees with group multiplication in $G$.  This is clearly equivalent to a functor from $\B$ to a category with one object and morphism set $G$.  Note that in case a $k$-category is graded by a group the situation is different. Indeed in this context associating to a morphism the formal sum of the degrees of its homogeneous components does not provide a linear functor from $\B$ to the single object category which has the group algebra as endomorphisms.

More generally, we consider in Section \ref{gradings} groupoid gradings of $\B$.  That is, functors from $\B$ to groupoids.  We introduce a smash product construction in this context which is an instance of a comma category, see \cite[II.6]{MacLane}.  We use it to obtain a covering of $\B$ from a groupoid grading of $\B$.  In the case where the grading is bijective on objects, we demonstrate that the covering associated to a grading is connected if and only if the factorization of the grading through the fundamental groupoid $Q\B$ of $\B$ is full (such gradings will be called \emph{effective}).  We prove that every Galois covering is isomorphic to a smash product by an effective grading.

Finally in Section \ref{effective} we consider the category of \emph{pointed Galois coverings of $\B$} as follows.  First a fixed base object $b_0$ of $\B$ is chosen. Then the objects of this category are Galois coverings of $\B$ provided with an object in the fibre of $b_0$, and the morphisms are those maps of coverings respecting the fibre objects.  We obtain another main result, namely the category of effective gradings of $\B$ is equivalent to the category of pointed Galois coverings of $\B$.

It follows that the smash product allows one to construct Galois coverings of a small category.  Hence we obtain an explicit description of the universal cover of $\B$ via the functor $Q_{\B}:\B\to Q\B$ -- as expected, the fundamental groupoid is an effective grading which is universal.  We end Section \ref{effective} with an example, using Cayley graphs to give an explicit description of the universal cover of the category consisting of two objects $x_0$ and $x$ and an arbitrary set of morphisms $E$ from $x$ to $x_0$.

We thank Andrea Solotar for an attentive reading of a preliminary version of this paper and for several improving suggestions.  We thank Clemens Berger for interesting discussions.  Thanks also to Jeremy Rickard, whose topological insights were of great help to the second author.

\section{The fundamental groupoid}\label{groupoid}

All categories considered in this paper are small. If $\B$ is a category the set of objects is denoted $\B_0$ and the set of morphisms is denoted $\B_1$.  The set of morphisms from  $x$ to  $y$ is denoted ${}_y\B_x$. The \textbf{source} object $s(\beta)$ of a morphism $\beta$ in ${}_y\B_x$ is $x$ while its \textbf{target} object $t(\beta)$ is $y$.

In order to consider the fundamental groupoid associated to $\B$ we recall the \textbf{category of fractions }considered by P. Gabriel and M. Zisman \cite{GAZI}, see also \cite{GEMA,KASC}. We provide a construction for the convenience of the reader and for further explicit use.

A directed graph $\Z$ is given by two sets $\Z_0$ (objects) and $\Z_1$ (arrows) as with a category.  In this context a morphism between directed graphs is like a functor between categories but without any requirement concerning composition, see \cite[II.7]{MacLane}.

Given a category $\B$ we denote by $\mathcal{B}_1^\bullet$ the set of morphisms of $\mathcal{B}$ that are not identities.

Let $\mathcal{B}$ be a category and let $\Z$ be a directed graph having the same objects as $\B$.

\begin{defi}\label{free}
The $\mathcal{B}$-\textbf{free category} over $\mathcal{Z}$, denoted  $\mathcal{F}_{\mathcal{B}}\Z$, is defined as follows.  As objects {$\left(\mathcal{F}_{\mathcal{B}}\Z\right)_0=\mathcal{B}_0$}.  A morphism $\delta$ in { ${}_y\left(\mathcal{F}_{\mathcal{B}}\Z\right)_x$ }is a sequence $(\delta_n,\hdots,\delta_1)$, where
\begin{itemize}
\item Each $\delta_i$ is an element of $\mathcal{B}_1^\bullet$ or $\Z$.
\item The source of $\delta_1$ is $x$ and the target of $\delta_n$ is $y$.  The empty sequence is only allowed in case $x=y$.
\item For each $i\in \{1,\hdots,n-1\}$ the target of $\delta_i$ is the source of $\delta_{i+1}$.
\item For each $i$, if $\delta_i$ is in $\mathcal{B}_1^\bullet$ then $\delta_{i+1}$ is in $\Z$. 
\end{itemize}
The composition $(\delta_n,\hdots,\delta_1)(\gamma_m,\hdots,\gamma_1)$ is given by concatenation unless $\gamma_m$ and $\delta_1$ are both in $\mathcal{B}_1^\bullet$. In this case composition is as follows: if $\delta_1\gamma_m\neq \id$  then
the result is $(\delta_n,\hdots,\delta_1\gamma_m,\hdots,\gamma_1)$.

If $\delta_1\gamma_m = \id$ then the composition is $(\delta_n,\hdots,\delta_2,\gamma_{m-1},\hdots,\gamma_1)$.
In this last case, if $m=n=1$ we obtain the empty sequence.
\end{defi}

The identity morphism at an object $x$ is given by the empty sequence with source and target $x$.  The reader might wish to compare this definition with that of the free category on a graph (see \cite[II.7]{MacLane}).  There is an obvious functor $L:\mathcal{B}\to \mathcal{F}_{\mathcal{B}}\Z$ that is the identity on objects, sends a non identity morphism $\beta$ to the sequence $(\beta)$ and an identity to the corresponding empty sequence.

\begin{pro}
The $\mathcal{B}$-free category $\mathcal{F}_{\mathcal{B}}\Z$ together with the functor $L$ has the following universal property.  Given $F:\mathcal{B}\to \mathcal{C}$ a functor and a morphism $\theta$ of directed graphs from $\Z$ to the underlying directed graph of $\C$ that coincides with $F$ on objects, there is a unique functor $F_\theta:\mathcal{F}_{\mathcal{B}}\Z\to \mathcal{C}$ such that $F_\theta L=F$ and such that $F_\theta(\gamma)=\theta(\gamma)$ for all $\gamma\in \Z$.
\end{pro}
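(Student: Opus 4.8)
The plan is to define $F_\theta$ by prescribing its value on every morphism of $\mathcal{F}_{\mathcal{B}}\Z$ in the only way compatible with the two constraints, and then to verify functoriality. On objects there is no choice: since $L$ is the identity on objects and $\theta$ agrees with $F$ on objects, I must set $F_\theta(x)=F(x)$ for every object $x$. On a non-empty morphism $(\delta_n,\hdots,\delta_1)$ I would first define, for each letter $\delta_i$, an element $\phi(\delta_i)$ of $\mathcal{C}$ by $\phi(\delta_i)=F(\delta_i)$ when $\delta_i\in\mathcal{B}_1^\bullet$ and $\phi(\delta_i)=\theta(\delta_i)$ when $\delta_i\in\Z$, and then set $F_\theta(\delta_n,\hdots,\delta_1)=\phi(\delta_n)\cdots\phi(\delta_1)$; the empty sequence at $x$ is sent to the identity of $F(x)$. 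The source and target conditions in Definition \ref{free}, together with the fact that both $F$ and $\theta$ preserve sources and targets, guarantee that this composite is a well-defined morphism of $\mathcal{C}$ from $F(x)$ to $F(y)$.

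Next I would check that $F_\theta$ is a functor. Preservation of identities is immediate from the definition on empty sequences. For composition, consider two composable morphisms $\delta=(\delta_n,\hdots,\delta_1)$ and $\gamma=(\gamma_m,\hdots,\gamma_1)$. When $\gamma_m$ and $\delta_1$ are not both in $\mathcal{B}_1^\bullet$ the composite is the concatenation, and $F_\theta$ visibly sends it to $\phi(\delta_n)\cdots\phi(\delta_1)\phi(\gamma_m)\cdots\phi(\gamma_1)=F_\theta(\delta)F_\theta(\gamma)$. The only cases requiring attention are those where $\gamma_m,\delta_1\in\mathcal{B}_1^\bullet$. If $\delta_1\gamma_m\neq\id$ the composite is $(\delta_n,\hdots,\delta_1\gamma_m,\hdots,\gamma_1)$, and since $F$ is a functor we have $\phi(\delta_1\gamma_m)=F(\delta_1\gamma_m)=F(\delta_1)F(\gamma_m)=\phi(\delta_1)\phi(\gamma_m)$, so the two products agree. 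If $\delta_1\gamma_m=\id$ the two letters cancel, and again functoriality of $F$ gives $\phi(\delta_1)\phi(\gamma_m)=F(\delta_1)F(\gamma_m)=F(\id)$, an identity of $\mathcal{C}$, which disappears from the product exactly as the cancelled letters disappear from the sequence. Hence $F_\theta$ respects composition, and this bookkeeping of the multiply-and-cancel cases is the main point of the argument.

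Finally, uniqueness is forced. By construction $F_\theta L=F$ both on objects and on morphisms (a non-identity $\beta$ goes to $(\beta)$ and then to $F(\beta)$), and $F_\theta(\gamma)=\theta(\gamma)$ for every $\gamma\in\Z$. Conversely, any reduced sequence $(\delta_n,\hdots,\delta_1)$ equals the composite $(\delta_n)\cdots(\delta_1)$ of its one-letter factors in $\mathcal{F}_{\mathcal{B}}\Z$; no combination or cancellation occurs in forming this composite, precisely because adjacent letters of a reduced sequence are never both in $\mathcal{B}_1^\bullet$. Thus any functor satisfying the two constraints must send it to $\phi(\delta_n)\cdots\phi(\delta_1)$, which is exactly the value taken by $F_\theta$. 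This establishes both existence and uniqueness.
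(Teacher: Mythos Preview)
Your proof is correct and follows essentially the same approach as the paper's own proof: define $F_\theta$ on objects via $F$ and on a sequence as the composite of the images of its letters under $F$ or $\theta$, then check functoriality against the three composition cases. The paper's proof is a one-sentence sketch of this construction, whereas you have carefully spelled out the functoriality verification and the uniqueness argument (via factoring a sequence into its one-letter pieces), but the underlying idea is identical.
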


\begin{proof}
On objects $F_\theta$ coincides with $F$ while the value of $F_\theta$ on a sequence $(\delta_n,\hdots,\delta_1)$ is the composition of the values of the $\delta_i$ under the maps $F$ or $\theta$ as appropriate.  Using the definition of composition given above, this defines a functor. \qed
\end{proof}

An \textbf{ideal relation} (also called a congruence relation) on $\mathcal{B}$ is an equivalence relation $\sim$ on each morphism set having the additional property that $\alpha\sim\beta$ implies $\alpha\gamma\sim\beta\gamma$ and $\gamma\alpha\sim\gamma\beta$ whenever these compositions make sense.  Given an ideal relation $\sim$ the \textbf{quotient category} $\mathcal{B}/\sim$ is the category with objects $\mathcal{B}_0$ and morphisms equivalence classes under $\sim$.  Note that the inherited composition is well-defined.

Let $R$ be a subset of $\B_1\times\B_1$ with the property that whenever $(\alpha,\beta)\in R$ then $\alpha$ and $\beta$ share source and target objects.  The \textbf{ideal relation generated by $R$}, denoted $\sim_R$, is the smallest ideal relation containing $R$.  Thus $\sim_R$ is the smallest ideal relation of $\B$ having the property that $\alpha\sim_R\beta$ whenever $(\alpha,\beta)\in R$.  The ideal relation generated by $R$ exists, being the intersection of the ideal relations containing $R$.  We retain this notation for the following:

\begin{pro}\label{respecting pairs means respecting relations}
Let $F:\mathcal{B}\to \mathcal{C}$ be a functor respecting $R$ in the sense that $(\alpha,\beta)\in R$ implies $F(\alpha)=F(\beta)$.  Then $F$ respects $\sim_R$, meaning $\alpha\sim_R\beta$ implies $F(\alpha)=F(\beta)$.
\end{pro}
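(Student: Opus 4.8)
The plan is to exploit the characterization of $\sim_R$ as the \emph{smallest} ideal relation containing $R$. The key observation is that a functor $F$ which respects $R$ induces, by pulling back equality in $\mathcal{C}$, an ideal relation on $\mathcal{B}$ that contains $R$; minimality of $\sim_R$ then forces it to be contained in this induced relation, which is exactly what we want.

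First I would define, for the given functor $F:\mathcal{B}\to\mathcal{C}$, a relation $\approx$ on each morphism set of $\mathcal{B}$ by declaring $\alpha\approx\beta$ precisely when $\alpha$ and $\beta$ share source and target and $F(\alpha)=F(\beta)$. Next I would verify that $\approx$ is an ideal relation. Being the pullback of the equality relation along a function, it is immediately an equivalence relation on each morphism set; and the congruence condition follows directly from functoriality: if $\alpha\approx\beta$ and the composite $\alpha\gamma$ is defined, then $F(\alpha\gamma)=F(\alpha)F(\gamma)=F(\beta)F(\gamma)=F(\beta\gamma)$, so $\alpha\gamma\approx\beta\gamma$, and symmetrically on the other side. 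This step is routine but is the technical heart of the argument.

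Then I would observe that, by the hypothesis that $F$ respects $R$, we have $R\subseteq\;\approx$: indeed $(\alpha,\beta)\in R$ gives $F(\alpha)=F(\beta)$, hence $\alpha\approx\beta$. Since $\sim_R$ is by definition the smallest ideal relation containing $R$, and $\approx$ is an ideal relation containing $R$, minimality yields $\sim_R\;\subseteq\;\approx$. Unwinding the definition of $\approx$, this inclusion says exactly that $\alpha\sim_R\beta$ implies $F(\alpha)=F(\beta)$, which is the desired conclusion.

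I do not expect any serious obstacle here; the only point demanding a little care is confirming that $\approx$ genuinely satisfies the source/target compatibility built into the notion of an ideal relation (so that the comparison with $\sim_R$ is legitimate), but this is automatic since $F(\alpha)=F(\beta)$ can only be asserted when $\alpha,\beta$ have common source and target, matching the standing convention for $R$ and for ideal relations in general.
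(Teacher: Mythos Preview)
Your proposal is correct and follows exactly the same approach as the paper's proof: define the kernel relation of $F$ (the paper calls it $\sim_F$, you call it $\approx$), observe it is an ideal relation containing $R$, and invoke minimality of $\sim_R$. The only difference is that you spell out the verification that functoriality yields the congruence property, whereas the paper leaves this implicit.
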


\begin{proof}
The relation $\sim_F$ given by $\alpha\sim_F\beta$ if and only if $F(\alpha)=F(\beta)$ is an ideal relation containing $R$.  Since $\sim_R$ is the smallest such ideal relation it is contained in $\sim_F$.  It follows that $F$ respects $\sim_R$. \qed
\end{proof}

\begin{defi}
Let $\mathcal{B}$ be a small category and $S$ a subset of $\mathcal{B}_1$.  Let $S'$ be a formal copy of $S$.  For each $\beta'\in S'$, define the source of $\beta'$ to be the target of $\beta$ and the target of $\beta'$ to be the source of $\beta$. This way we consider $S'$ as a directed graph with objects $\B_0$. The \textbf{$S$-category of fractions} $\mathcal{B}_S$ (also denoted $\B[S^{-1}])$) is the category $\mathcal{F}_{\mathcal{B}}S'/\sim_S$, where $\sim_S$ is the ideal relation generated by the pairs $(\beta\beta',\id_{t(\beta)})$ and $(\beta'\beta,\id_{s(\beta)})$ for each $\beta\in S$.
\end{defi}

Note that there is a canonical functor $Q_S:\mathcal{B}\to \mathcal{B}_S$ given by the composition $\mathcal{B}\to \mathcal{F}_{\mathcal{B}}S'\to \mathcal{B}_S$.  Note also that $Q_S(\beta)$ is invertible for each $\beta\in S$.

\begin{thm}\label{UP of Q}
The category $\mathcal{B}_S$ together with the functor $Q_S:\mathcal{B}\to \mathcal{B}_S$ satisfies the following universal property: whenever $F:\mathcal{B}\to \mathcal{C}$ is a functor such that $F(\beta)$ is invertible for each $\beta\in S$, there is a unique functor $F':\mathcal{B}_S\to \mathcal{C}$ such that $F=F' Q_S$.
\end{thm}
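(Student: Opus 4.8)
The plan is to assemble the statement from the two universal constructions already in place: the universal property of the $\mathcal{B}$-free category and Proposition \ref{respecting pairs means respecting relations}. Given $F:\mathcal{B}\to\mathcal{C}$ with $F(\beta)$ invertible for every $\beta\in S$, I first build a morphism of directed graphs $\theta$ from $S'$ to the underlying graph of $\mathcal{C}$ by setting $\theta(x)=F(x)$ on objects and $\theta(\beta')=F(\beta)^{-1}$ on arrows. This is well defined: since $\beta'$ has source $t(\beta)$ and target $s(\beta)$, the morphism $F(\beta)^{-1}\colon F(t(\beta))\to F(s(\beta))$ has exactly the required source and target, and $\theta$ agrees with $F$ on objects by construction.

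Next I apply the universal property of the free category to the pair $(F,\theta)$, obtaining the unique functor $F_\theta:\mathcal{F}_{\mathcal{B}}S'\to\mathcal{C}$ with $F_\theta L=F$ and $F_\theta(\beta')=F(\beta)^{-1}$. The key verification is that $F_\theta$ respects the defining pairs of $\sim_S$. For $\beta\in S$ one computes $F_\theta(\beta\beta')=F(\beta)F(\beta)^{-1}=\id_{F(t(\beta))}=F_\theta(\id_{t(\beta)})$ and likewise $F_\theta(\beta'\beta)=F(\beta)^{-1}F(\beta)=\id_{F(s(\beta))}=F_\theta(\id_{s(\beta)})$; this is precisely where invertibility of $F(\beta)$ is used. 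Hence $F_\theta$ respects the generating set of pairs, and by Proposition \ref{respecting pairs means respecting relations} it respects $\sim_S$. Therefore $F_\theta$ factors through the quotient functor $\mathcal{F}_{\mathcal{B}}S'\to\mathcal{B}_S$, yielding a functor $F':\mathcal{B}_S\to\mathcal{C}$. Composing with $Q_S$ and using $F_\theta L=F$ then gives $F'Q_S=F$, establishing existence.

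For uniqueness I would use that every morphism of $\mathcal{B}_S$ is the class of a sequence built from arrows $(\beta)$ with $\beta\in\mathcal{B}_1^\bullet$ and arrows $(\beta')$ with $\beta'\in S'$, so any functor out of $\mathcal{B}_S$ is determined by its values on objects and on the classes of these generators. Suppose $G:\mathcal{B}_S\to\mathcal{C}$ also satisfies $GQ_S=F$. Since $Q_S$ is the identity on objects, $G$ and $F'$ both send an object to its image under $F$, and on the classes $Q_S(\beta)=[(\beta)]$ they both equal $F(\beta)$. Moreover $[(\beta)]$ is invertible in $\mathcal{B}_S$ with inverse $[(\beta')]$, since $[(\beta)][(\beta')]=[\beta\beta']=\id_{t(\beta)}$ and $[(\beta')][(\beta)]=[\beta'\beta]=\id_{s(\beta)}$ by definition of $\sim_S$; as functors preserve inverses, $G([(\beta')])=F(\beta)^{-1}=F'([(\beta')])$. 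Thus $G$ and $F'$ agree on all generators and hence coincide.

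The computation in the second paragraph is the heart of the argument, but it is short once $\theta$ is chosen correctly; the only real subtlety is bookkeeping the composition order in $\mathcal{F}_{\mathcal{B}}S'$ (so that $\beta\beta'$ means ``first $\beta'$, then $\beta$'') so that the cancellation pairs match those generating $\sim_S$. Everything else is a formal consequence of the two earlier results.
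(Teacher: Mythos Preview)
Your proof is correct and follows essentially the same route as the paper: define $\theta(\beta')=F(\beta)^{-1}$, invoke the universal property of $\mathcal{F}_{\mathcal{B}}S'$ to obtain $F_\theta$, check the generating pairs, and apply Proposition~\ref{respecting pairs means respecting relations} to factor through $\mathcal{B}_S$. The only difference is that you spell out the uniqueness argument via generators, whereas the paper simply declares uniqueness to be clear.
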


\begin{proof}
Define the map $\eta:S'\to \mathcal{C}_1$ by $\beta'\mapsto F(\beta)^{-1}$.  By the universal property of the $\mathcal{B}$-free category, we have a unique functor $F_\eta:\mathcal{F}_{\mathcal{B}}S'\to \mathcal{C}$ that agrees with $F$ on $\mathcal{B}$ and with $\eta$ on $S'$.  Note that ${F}_\eta$ respects the generating pairs $(\beta\beta',\textnormal{id})$ and $(\beta'\beta,\textnormal{id})$, so by \ref{respecting pairs means respecting relations}, ${F}_\eta(\alpha)={F}_\eta(\beta)$ whenever $\alpha\sim_S\beta$.  Thus  ${F}_\eta$ has the required property and is clearly unique. \qed
\end{proof}

When $S=\B_1$, the category $\mathcal{B}_{\B_1}$ will be denoted  $Q\B$ and the canonical functor $\B\to Q\B$ will be denoted $Q_{\B}$.  Note that every morphism of $Q\B$ is invertible.  That is, $Q\B$ is a {groupoid}.  In this case a sequence as considered in Definition \ref{free} is called a \textbf{walk}.

A category is said to be $\textbf{connected}$ if its underlying undirected graph has one connected component.  Note that $Q\mathcal{B}$ is connected if and only if $\mathcal{B}$ is connected.  In \cite{QU} D.Quillen defines the fundamental group $\pi_1(\mathcal{B})$ of a small connected category $\mathcal{B}$ to be the fundamental group of a related topological space, namely the geometric realization of a simplicial set known as the \emph{nerve} of $\B$.  See also the work by G. Minian \cite{MIN} where the fundamental group is obtained through functors from interval categories and the paper by K. Tanaka \cite{ta} where a groupoidification of a category is considered.

In the quoted paper D. Quillen proves that the fundamental group of $\mathcal{B}$ he defines is isomorphic to the automorphism group of any object of $Q\mathcal{B}$.  Since $Q\B$ is a connected groupoid, it is equivalent to the full subcategory of $Q\B$ generated by any single object.  The endomorphisms of an object form a group that is isomorphic to $\pi_1(\B)$, and one thus obtains non-canonical functors $\B\to \pi_1(\B)$ by composing $Q_{\B}$ with an appropriate equivalence.  In order to avoid the difficulties arising from this choice of object it is natural to consider the entire $Q\B$ as the fundamental groupoid of $\B$. This point of view agrees with the analysis in the Introduction of \cite{ta}.

\begin{defi}
Let $\mathcal{B}$ be a small category.  The \textbf{fundamental groupoid} of $\mathcal{B}$ is the category of fractions $Q\mathcal{B}$.
\end{defi}

\begin{rem}
In case $\mathcal{M}$ is a monoid thought of as a category with one object then $Q\mathcal{M}$ is its group of fractions (also called the \emph{universal group} of $\mathcal{M}$).  Already we can see that the functor $Q_{\B}$ need be neither full nor faithful in general.  Indeed if $\mathbb{N}_0$ is the additive monoid of non-negative integers then $Q\mathbb{N}_0=\mathbb{Z}$, hence $Q_{\mathbb{N}_0}$ is not full.  If $\mathcal{E}=\{1,e\}$ is the monoid where $e^2=e$ then $Q\mathcal{E}$ is the trivial group so $Q_{\mathcal{E}}$ is not faithful.  Determining when $Q_{\mathcal{M}}$ is faithful is known as the ``group embeddability'' question, see \cite[1.10]{CP}.
\end{rem}

We end this Section by noting that $Q$ is a functor from  the category of small categories to the category of groupoids. Indeed let $F:\C\to\B$ be a functor and consider its composition $\C\to Q\B$ with $Q_\B$. By the universal property of the category of fractions of $\C$ there is a unique functor $QF:Q\C\to Q\B$ making the following square commute:
$$
\xymatrix{
\C \ar[d]_{F} \ar[r]^{Q_{\C}} & Q\C\ar[d]^{QF}\\
\B \ar[r]_{Q_{\B}}   &Q\mathcal{B}.}
$$

\section{Galois coverings and groupoids}\label{coverings}

Coverings of topological spaces (see for instance \cite{H}) are studied  by  J.P. May in \cite[Chapter 3]{May} by first considering coverings of groupoids. In order to study  coverings of categories we will follow a similar approach in this section. In particular we will prove that for a category $\B$ the functor $Q$ induces an equivalence between the category of Galois coverings of $\B$ and the category of Galois coverings of the groupoid $Q\B$.

\begin{defi}\cite[Chapter 13]{HI}, \cite[10.2]{brGroupoids}, \cite[4.1 and 4.17]{ta}.
Let $\B$ be a category and let $b$ be an object of $\B$.
The \textbf{source star} at $b$ is the set of morphims of $\B$ with source $b$. Similarly
the \textbf{target star} at $b$ is the set of morphisms of $\B$ with target $b$. The \textbf{star} at $b$ is the disjoint union of the source star at $b$ and the target star at $b$.

A functor $F:\mathcal{C}\to \mathcal{B}$ is a \textbf{covering} of $\mathcal{B}$ if $F$ is surjective on objects and if for each object $c$ in $\C$ the map induced by $F$ from the star at $c$ to the star at $Fc$ is bijective. Equivalently $F$ has to induce bijections between corresponding source and target stars. The covering $F$ is said to be \textbf{connected} if  $\C$ is connected.
\end{defi}

Note that if a covering $\C$ of $\B$ is connected, then $\B$ must also be connected.  That the functor $F$ is surjective on objects follows easily from the rest of the definition when $\B$ is connected and $\C$ is non-empty.

\begin{rem}
If $\B$ is a groupoid and $F: \C\to\B$ is a surjective on objects functor, it is sufficient to require that $F$ induces bijections on source (or target) stars in order to be a covering.  Note also that if $F$ is a covering and $\B$ is a groupoid then $\C$ is also a groupoid.
\end{rem}

Let $F$ be a covering.  Since $QF$ and $F$ coincide on objects, $QF$ is surjective on objects.

We provide first results concerning coverings which are analogous to those obtained in the linear case, see \cite{CRS} and \cite{le,le1,le2}.  If $F:\C\to \B$ and $G:\D\to \B$ are coverings, a \textbf{map of coverings} $F\to G$ is a functor $H:\C\to\D$ such that $GH=F$.

\begin{rem}
In case of linear categories over a field, a morphism of coverings needs to include in its data an automorphism $J$ of $\B$ in order to ensure that some coverings are isomorphic, see \cite{le2,le,CRS}. In the present context there is no need to do so since if $H: \C\to \D$ is an invertible functor and $J:\B\to\B$ is an automorphism satisfying $GH=JF$, then there is a unique invertible functor $H':\C\to\D$ such that $GH'=F$. Such an $H'$ can be constructed in this context (see below) essentially because any $F$-lifting of a morphism of $\B$ is still a morphism of $\C$ while in the linear situation an  $F$-lifting of a morphism of $\B$ is in general a non trivial sum of morphisms having same source and different targets (or same target and different sources). The construction of $H'$ is as follows: it coincides with $H$ on objects while for a morphism $f$ we consider $J^{-1}(F(f))$ and its $F$-lifting $f'$ having the same source as $f$, so that $F(f')= J^{-1}(F(f))$.  Then we define $H'$ by $H'(f)= H(f')$.
\end{rem}

The \textbf{automorphism group} $\aut F$ of a covering is the group of invertible maps of coverings from $F$ to $F$.  It acts on the {$F$-fibres} of objects of $\B$  in the natural way, and the action is free by similar arguments to those in \cite[Corollary 2.10]{CRS}.  Moreover if $\C$ is connected the action is transitive on every fibre if and only if it is transitive on any particular one - for the proof see \cite[Proposition 3.6]{CRS}.

\begin{defi}
A covering of categories $F:\C\to \B$ is a \textbf{Galois covering} if $\C$ is connected and if its automorphism group acts transitively on some fibre (or equivalently on any fibre).
\end{defi}

Galois coverings can be modeled using group actions and corresponding orbit categories as follows.

\begin{defi}
An action of a group $\Gamma$ on a category $\C$ is a group homomorphism from $\Gamma$ to the group of automorphisms of $\C$.
This provides an action of $\Gamma$ on objects and on morphisms.  For $\gamma\in \Gamma$ and $f\in {}_y\B_x$ we have that $\gamma f\in {}_{\gamma y}\B_{\gamma x}$.  Moreover $\delta(\gamma f)=(\delta\gamma)f$ for $\gamma,\delta\in \Gamma$, and $\gamma(gf)=(\gamma g)(\gamma f)$ whenever $g$ and $f$ are composable morphisms of $\C$.
\end{defi}

When a group $\Gamma$ acts freely on the objects of a category $\C$, we can form an \textbf{orbit category} $\C/\Gamma$. Its objects are the orbits of objects under $\Gamma$, while the morphisms from an orbit $\alpha$ to an orbit $\beta$ are the orbits of the action of $\Gamma$ on the disjoint union $$\coprod_{x\in\alpha, y\in\beta} {}_y\C_x.$$
Composition is well defined precisely because the action of $\Gamma$ on $\C_0$ is free. Moreover the projection functor $P:\C\to \C/\Gamma$ is a covering. Its automorphism group is $\Gamma$ and it acts transitively on the fibres of objects. If moreover $\C$ is connected then $P$ is a Galois covering. These facts are easy to prove, see also \cite[Proposition 3.4]{CRS} for the linear case.

\begin{thm} \cite[Theorem 3.7]{CRS}\label{galois are quotients}
Let $F:\C\to \B$ be a Galois covering and let $P:\C\to \C/\aut F$ be the Galois covering obtained using the free action of $\aut F$ on $\C$. There is a unique isomorphism of categories $I: \C/\aut F\to \B$ such that $IP=F$.
\end{thm}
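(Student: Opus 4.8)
The plan is to construct the isomorphism $I$ directly from the universal property of the orbit category, and then verify that $IP = F$ forces uniqueness. Since $F:\C\to\B$ is a covering with automorphism group $\Gamma := \aut F$ acting freely on $\C$ (freeness on objects being part of the setup, since $\Gamma$ acts freely on fibres), the orbit category $\C/\Gamma$ exists and the projection $P:\C\to\C/\Gamma$ is a Galois covering. The key observation is that $F$ is constant on $\Gamma$-orbits: for any $\gamma\in\Gamma$ and any object $c$ or morphism $f$ of $\C$, we have $F(\gamma c)=Fc$ and $F(\gamma f)=Ff$, precisely because each $\gamma$ is a map of coverings from $F$ to itself, i.e. $F\gamma=F$.

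First I would define $I$ on objects: an object of $\C/\Gamma$ is an orbit $\alpha=\Gamma c$, and since $F$ is constant on the orbit, setting $I(\alpha):=F(c)$ is well-defined and independent of the chosen representative $c$. On morphisms, a morphism of $\C/\Gamma$ from $\Gamma x$ to $\Gamma y$ is a $\Gamma$-orbit of morphisms in $\coprod_{x'\in\Gamma x,\,y'\in\Gamma y}{}_{y'}\C_{x'}$; again $F$ is constant on such an orbit, so I define $I$ of the orbit to be $F(f)$ for any representative $f$. Functoriality of $I$ follows from functoriality of $F$ together with the definition of composition in the orbit category. By construction $I\circ P=F$, since $P$ sends $c\mapsto\Gamma c$ and $f\mapsto\Gamma f$, and then $I(\Gamma c)=Fc$, $I(\Gamma f)=Ff$.

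The substantial work is showing $I$ is an isomorphism of categories, and this is where the covering and Galois hypotheses are essential. For surjectivity on objects I use that $F$ is surjective on objects (part of the definition of covering). For injectivity on objects I use that $\Gamma$ acts transitively on each fibre of $F$: if $F(c)=F(c')$ then $c,c'$ lie in the same fibre, so $c'=\gamma c$ for some $\gamma\in\Gamma$, whence $\Gamma c=\Gamma c'$. For the morphism sets I would fix orbits $\Gamma x,\Gamma y$ and show $I$ restricts to a bijection from ${}_{\Gamma y}(\C/\Gamma)_{\Gamma x}$ onto ${}_{I(\Gamma y)}\B_{I(\Gamma x)}$. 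Surjectivity here is the star bijection property of the covering $F$ combined with transitivity on fibres (lift any morphism of $\B$ through the star at a representative), and injectivity amounts to showing that two morphisms of $\C$ with the same image under $F$ and the correct source/target orbits lie in one $\Gamma$-orbit — this again reduces, via the freeness and the star-bijectivity, to the transitivity of $\Gamma$ on fibres. I expect this last injectivity on morphisms to be the main obstacle, as it requires carefully using that a map of coverings is determined by its value on a single object (the unique-lifting property implicit in the star bijections) to produce the required $\gamma\in\Gamma$ conjugating one morphism to the other.

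Finally, uniqueness of $I$ is immediate: any $I'$ with $I'P=F$ must satisfy $I'(\Gamma c)=I'(P(c))=F(c)$ on objects and likewise on morphisms, since $P$ is surjective on both objects and morphisms; hence $I'=I$. This last point uses only that $P$ is (objectwise and morphism-wise) surjective, which is clear from the construction of the orbit category.
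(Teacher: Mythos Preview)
The paper does not give its own proof of this theorem: it is stated with a citation to \cite[Theorem~3.7]{CRS} and immediately used. Your direct argument is correct and is the standard one.

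One remark: you flag injectivity on morphisms as the main obstacle and invoke the ``determined by one object'' principle, but in fact this step is no harder than the others and does not need that principle. Given representatives $f:x'\to y'$ and $g:x''\to y''$ with $F(f)=F(g)$ and $x',x''$ in the same $\Gamma$-orbit, transitivity on the fibre of $F(x')$ gives $\gamma\in\Gamma$ with $\gamma x'=x''$; then $\gamma f$ and $g$ share their source and their $F$-image, so star-bijectivity of $F$ forces $\gamma f=g$. That is all; no appeal to Proposition~\ref{equal} is required.
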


We will use the previous canonical form for a Galois covering.

\begin{thm}\label{Qgalois}
Let $\C$ be a connected category together with a free action of a group $\Gamma$ and let $P: \C\to \C/\Gamma$ be the corresponding Galois covering. Then $\Gamma$ acts freely on $Q\C$ and the quotient category $Q\C/\Gamma$ is canonically isomorphic to $Q(C/\Gamma)$. Moreover through this canonical isomorphism the projection functor $Q\C\to (Q\C)/\Gamma$ corresponds to $QP$:

$$
\xymatrix{
\C \ar[d]_{P} \ar[rr]^{Q_{\C}}  & & Q\C\ar[d]^{QP}\\
\C/\Gamma \ar[rr]_{Q_{\C/\Gamma}}  & & Q(\C/\Gamma)=(Q\C)/\Gamma }
$$
Consequently $QP$ is a Galois covering.
\end{thm}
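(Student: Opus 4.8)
The plan is to establish the three assertions in order and to deduce the final ``Galois covering'' claim from them, carrying out the central isomorphism entirely through universal properties rather than by manipulating walks. Throughout I write $\pi:Q\C\to Q\C/\Gamma$ for the orbit projection and I use freely the commuting square from the end of Section~\ref{groupoid}, which for the functor $P$ reads $QP\, Q_\C = Q_{\C/\Gamma}\, P$.

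First I would treat the freeness of the $\Gamma$-action on $Q\C$. Since $Q$ is a functor to groupoids, each automorphism $\gamma$ of $\C$ yields an automorphism $Q\gamma$ of $Q\C$, and $\gamma\mapsto Q\gamma$ is a group homomorphism, that is, an action of $\Gamma$ on $Q\C$. Because $Q_\C$ is the identity on objects, the square $Q\gamma\, Q_\C = Q_\C\, \gamma$ shows that $Q\gamma$ acts on $(Q\C)_0=\C_0$ exactly as $\gamma$ does; as the original action is free on objects, so is the induced one. Hence the orbit category $Q\C/\Gamma$ is defined, and since $\C$ connected implies $Q\C$ connected, the projection $\pi$ is a Galois covering by the remarks preceding Theorem~\ref{galois are quotients}.

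Next I would construct the canonical isomorphism $I:Q\C/\Gamma\to Q(\C/\Gamma)$ together with an inverse candidate. The orbit projection satisfies $P\gamma=P$ for every $\gamma$, since an object or morphism and its $\gamma$-translate lie in the same orbit; functoriality of $Q$ then gives $QP\circ Q\gamma = Q(P\gamma)=QP$, so $QP$ is $\Gamma$-invariant and factors uniquely as $QP=I\pi$ for a functor $I:Q\C/\Gamma\to Q(\C/\Gamma)$, by the universal property of the orbit category. This identity is already the third assertion: through $I$ the projection $\pi$ corresponds to $QP$. For the inverse, the composite $\pi Q_\C:\C\to Q\C/\Gamma$ is also $\Gamma$-invariant, because $\pi Q_\C\gamma = \pi Q\gamma\, Q_\C = \pi Q_\C$ using the Section~\ref{groupoid} square and $\pi Q\gamma=\pi$; it therefore factors as $\psi P=\pi Q_\C$ for a unique $\psi:\C/\Gamma\to Q\C/\Gamma$. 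As $Q\C/\Gamma$ is a groupoid, $\psi$ inverts every morphism, so by the universal property of the category of fractions (Theorem~\ref{UP of Q}) it factors as $\psi = J\, Q_{\C/\Gamma}$ for a unique $J:Q(\C/\Gamma)\to Q\C/\Gamma$.

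The remaining step, checking that $I$ and $J$ are mutually inverse, is the part I expect to require the most careful bookkeeping, though it is ultimately formal: each identity reduces, via the uniqueness halves of the relevant universal properties, to the single square $QP\, Q_\C = Q_{\C/\Gamma}\, P$. For $IJ=\id$ I would precompose with $Q_{\C/\Gamma}$ and then with $P$, computing $IJ\, Q_{\C/\Gamma} P = I\psi P = I\pi Q_\C = QP\, Q_\C = Q_{\C/\Gamma}P$; since $P$ is surjective on objects and morphisms this gives $IJ\, Q_{\C/\Gamma}=Q_{\C/\Gamma}$, whence $IJ=\id$ by uniqueness in Theorem~\ref{UP of Q}. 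Symmetrically, for $JI=\id$ I would precompose with $\pi$ and then with $Q_\C$, computing $JI\pi Q_\C = J\, QP\, Q_\C = J\, Q_{\C/\Gamma}P = \psi P = \pi Q_\C$; cancelling $Q_\C$ by Theorem~\ref{UP of Q} and then $\pi$ by surjectivity yields $JI=\id$. Finally, since $QP=I\pi$ with $I$ an isomorphism and $\pi$ a Galois covering, and since replacing the base by an isomorphic one preserves connectedness of the source, the automorphism group, and transitivity on fibres, $QP$ is a Galois covering, completing the argument.
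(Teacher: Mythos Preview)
Your proof is correct and follows essentially the same strategy as the paper: both construct the two comparison functors via the universal property of the orbit category (for $I$) and of the category of fractions (for $J$, through an intermediate $\psi:\C/\Gamma\to Q\C/\Gamma$), and then argue they are mutually inverse. Your treatment of the $\Gamma$-action on $Q\C$ via functoriality of $Q$ is a little slicker than the paper's explicit argument with walks and ideal relations, and you supply the verification that $IJ=\id$ and $JI=\id$ which the paper simply asserts; but the overall route is the same.
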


\begin{proof}
First we check that $\Gamma$ acts on $Q\C$. Recall that $\C_1'$ is a copy of $\C_1$ with the direction of the arrows reversed.  Hence $\Gamma$ acts on the directed graph $\C_1'$ as it does on $\C_1$. This gives an action of $\Gamma$ on the $\mathcal{C}$-free category $\mathcal{F}_{\mathcal{C}}\mathcal{C}_1'$ and it is clear that the ``inverting'' relations required to obtain $Q\C$ from $\mathcal{F}_{\mathcal{C}}\mathcal{C}_1'$ are stable under the action of $\Gamma$.  It follows that the set of all ideal relations containing these relations is closed under the action of $\Gamma$.  Hence the intersection of these ideal relations (that is, the ideal relation generated by the inverting relations) is $\Gamma$-stable. Consequently $F$ acts on $Q\C$, and since the actions of $\Gamma$ on $\C$ and on $Q\C$ agree on objects, the action on $Q\C$ is free on objects.

We will prove next that the categories $Q\C / \Gamma$ and $Q (\C/\Gamma)$ are isomorphic. By direct inspection there is a well defined and canonical functor $\C/\Gamma\to (Q\C)/\Gamma$. The category $(Q\C)/\Gamma$  is a groupoid since quotients of groupoids by free actions are groupoids.  By the universal property we infer a unique functor $Q(\C/\Gamma)\to (Q\C)/\Gamma$.  Conversely note that $QP$ is constant on orbit objects and morphisms.  Clearly the quotient category by a free action enjoys the universal property implying that this determines the existence of a unique functor $(Q\C)/\Gamma \to Q(\C/\Gamma)$.  The two functors described are maps of coverings and are mutually inverse. \qed
\end{proof}

Given a covering of $Q\mathcal{B}$ we will construct a corresponding covering of $\mathcal{B}$ as a pullback.  For this we recall the following easy result from \cite[Proposition 29]{HI}. Note however that the definition of covering in \cite{HI} is not the same as ours - P.J. Higgins does not require surjectivity on objects and only demands that the functor be bijective on source stars.  Nevertheless the proof of the following goes through.

\begin{pro}\cite[Proposition 29]{HI}
Consider a pullback of categories:

$$
\xymatrix{
\mathcal{B}\times_{\mathcal{D}}\mathcal{G} \ar[d]_{F} \ar[r] & \mathcal{G}\ar[d]^{G}\\
\mathcal{B} \ar[r]_{\theta} &\mathcal{D}}
$$
If $G$ is a covering then $F$ is a covering.\qed
\end{pro}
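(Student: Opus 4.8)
The plan is to unwind the explicit description of the pullback category and then reduce the covering property of $F$ to that of $G$ one star at a time, checking everything locally at each object.

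First I would describe $\B\times_\D\G$ concretely. Its objects are the pairs $(b,g)$ with $b\in\B_0$, $g\in\G_0$ and $\theta(b)=G(g)$; its morphisms are the pairs $(\beta,\gamma)$, with $\beta$ a morphism of $\B$ and $\gamma$ a morphism of $\G$ satisfying $\theta(\beta)=G(\gamma)$; source and target are computed componentwise; and $F$ is the first projection $(b,g)\mapsto b$, $(\beta,\gamma)\mapsto\beta$. For surjectivity on objects, given $b\in\B_0$ I would use that $G$, being a covering, is surjective on objects to produce $g\in\G_0$ with $G(g)=\theta(b)$; then $(b,g)$ is an object of the pullback with $F(b,g)=b$.

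The substantive step is the star condition. Fix an object $(b,g)$ and consider the source star at it. I claim $F$ maps this bijectively onto the source star at $b$. Given a morphism $\beta$ of $\B$ with source $b$, its image $\theta(\beta)$ is a morphism of $\D$ with source $\theta(b)=G(g)$, so it lies in the source star at $G(g)$. Since $G$ is a covering, the source star at $g$ maps bijectively onto the source star at $G(g)$; hence there is a unique morphism $\gamma$ of $\G$ with source $g$ and $G(\gamma)=\theta(\beta)$. The pair $(\beta,\gamma)$ is then a morphism of the pullback with source $(b,g)$, it satisfies $F(\beta,\gamma)=\beta$, and it is the only such morphism because $\gamma$ was forced to be unique. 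This exhibits $F$ as a bijection from the source star at $(b,g)$ to the source star at $b$. The target star is handled identically, replacing source stars by target stars throughout and invoking the target star bijectivity of $G$; together these give the required bijection of full stars, so $F$ is a covering.

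I expect no serious obstacle here: the argument is essentially bookkeeping, and the only point requiring care is to track the compatibility condition $\theta(\beta)=G(\gamma)$ and to verify that the lift $\gamma$ furnished by $G$ sits in the correct star, so that $(\beta,\gamma)$ really is a morphism of the pullback with the prescribed source (or target). In particular no connectivity or groupoid hypotheses on $\B$, $\G$, $\D$ are needed, since the covering property is verified purely locally at each object.
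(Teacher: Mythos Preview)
Your argument is correct and is exactly the standard one: describe the pullback explicitly and use the star bijectivity of $G$ to lift each $\beta$ in the star at $b$ uniquely to a pair $(\beta,\gamma)$ in the star at $(b,g)$. The paper does not actually give a proof of this proposition; it simply cites \cite[Proposition 29]{HI} and remarks that although Higgins' definition of covering differs slightly (he omits surjectivity on objects and only asks for bijections on source stars), the proof there goes through. Your write-up supplies precisely that routine verification, including the surjectivity on objects which the paper's definition additionally requires.
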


The following is also clear:
\begin{pro}
In the situation above assume that $G$ is a Galois covering, that $\B$ and $\D$ have the same set of objects and that $\theta$ is the identity on objects. Then $F$ is a Galois covering and $\aut F=\aut G$.\qed
\end{pro}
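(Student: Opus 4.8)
The plan is to realize $F$ as the orbit-category projection for the group $\Gamma:=\aut G$, and then to reduce the Galois property of $F$ entirely to the connectivity of the pullback $\C:=\B\times_{\D}\G$.

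First I would record the basic structure. By the preceding proposition $F$ is a covering, and the second projection $\pi\colon\C\to\G$ identifies, for every object $b$, the fibre $F^{-1}(b)$ with the fibre $G^{-1}(b)$; here I use that $\theta$ is the identity on objects and $\B_0=\D_0$, so that an object of $\C$ over $b$ is precisely a choice of object of $\G$ over $b$. Since $G$ is Galois, $\Gamma=\aut G$ acts freely on $\G$ and simply transitively on each such fibre, so $|F^{-1}(b)|=|\Gamma|$. The action of $\Gamma$ on the $\G$-coordinate then lifts to a free action on $\C$, namely $\sigma\cdot(b,g)=(b,\sigma g)$ on objects and $\sigma\cdot(\beta,\gamma)=(\beta,\sigma\gamma)$ on morphisms; this is well defined because $G\sigma=G$, and since $F$ forgets the $\G$-coordinate each $\sigma$ is a map of coverings $F\to F$. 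This yields an injective homomorphism $\Gamma\to\aut F$.

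Next I would show that $F$ exhibits $\B$ as the orbit category $\C/\Gamma$. As $F$ is constant on $\Gamma$-orbits it factors as $F=\bar F\,P$ through the projection $P\colon\C\to\C/\Gamma$, and I claim the induced $\bar F\colon\C/\Gamma\to\B$ is an isomorphism of categories. On objects this is exactly the statement that $F^{-1}(b)$ is a single $\Gamma$-orbit, which is the simple transitivity noted above, while surjectivity on objects is the surjectivity of the covering $G$. On morphisms I would use unique lifting along the covering $F$: a morphism $\beta\colon b_1\to b_2$ of $\B$ has, for each object over $b_1$, a unique lift with that source, and as the source runs through the single $\Gamma$-orbit $F^{-1}(b_1)$ these lifts form a single $\Gamma$-orbit of morphisms of $\C$, giving the inverse bijection to $\bar F$ on hom-sets. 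Granting this, once $\C$ is connected the orbit-category facts recalled just before Theorem \ref{galois are quotients} show that $P$ is a Galois covering with automorphism group $\Gamma$, and transporting along $\bar F$ gives that $F$ is Galois with $\aut F=\Gamma=\aut G$.

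The main obstacle is therefore connectivity of $\C$, and I expect it to be the only genuinely nontrivial point. I would reduce it as follows: since $F$ is a covering and $\B$ is connected, walk-lifting shows that every connected component of $\C$ surjects onto $\B_0$ and hence meets the fibre $F^{-1}(b_0)$ over a fixed base object, so it suffices to prove that $F^{-1}(b_0)$ lies in one component. For two fibre objects $g_1,g_2$ simple transitivity gives $\sigma\in\Gamma$ with $\sigma g_1=g_2$, and connecting them amounts to a walk in $\C$ from $(b_0,g_1)$ to $(b_0,g_2)$, i.e.\ to a walk in $\G$ from $g_1$ to $\sigma g_1$ whose $G$-image lies in $\theta(\B)$. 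A walk in $\G$ exists because $\G$ is connected, so the real content is that it can be chosen to come from $\B$ via $\theta$; this is precisely the requirement that the walks of $\B$ realize all the monodromy of $G$. It holds in the situation to which this proposition is applied, where $\D=Q\B$ and $\theta=Q_{\B}$, since $Q_{\B}$ induces an isomorphism of fundamental groups (equivalently $Q(Q\B)=Q\B$), so every $\sigma\in\Gamma$ is realized by a closed walk in $\B$. With connectivity established, the orbit-category argument of the previous paragraph finishes the proof.
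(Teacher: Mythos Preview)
The paper gives no proof here beyond the \qed, so there is nothing substantive to compare against; your write-up is far more detailed than what the authors supply. Your reduction to an orbit-category presentation is clean: the free $\Gamma$-action on the $\G$-coordinate, the identification $\bar F\colon \C/\Gamma\xrightarrow{\sim}\B$ via unique lifting, and the appeal to the orbit-category facts preceding Theorem~\ref{galois are quotients} all work exactly as you describe once $\C$ is known to be connected.

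You are also right to flag connectivity as the genuine issue, and in fact your caution is fully justified: the proposition is \emph{false} at the stated level of generality. Take $\B$ to be the trivial one-object category, $\D$ the one-object groupoid with automorphism group $\mathbb Z$, $\theta$ the inclusion, and $G\colon\G\to\D$ the universal cover of $\D$ (so $\G$ has object set $\mathbb Z$ and a unique morphism between any two objects). Then the pullback $\C=\B\times_\D\G$ is the discrete category on $\mathbb Z$, hence disconnected, and $\aut F$ is the full permutation group of the fibre rather than $\mathbb Z$. So neither conclusion holds without a further hypothesis ensuring that walks in $\B$ realize the monodromy of $G$.

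Your final paragraph handles this correctly: in the only situation where the proposition is actually used, namely $\D=Q\B$ and $\theta=Q_\B$, the functor $Q_\B$ is a bijection on walks (indeed $Q(Q\B)=Q\B$), so every closed walk in $\G$ over $b_0$ projects to a closed walk in $Q\B$ which then comes from a walk in $\B$, and your lifting argument goes through. In short, your proof is correct for the intended application and, more than that, it exposes an imprecision in the paper's statement that the bare \qed conceals.
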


The category of Galois coverings of a category $\B$ with morphisms maps of coverings is denoted $\galcov_\B$.  We will also consider another category with fewer morphisms, namely the category $\galcov (\B,b_0)$ of \textbf{pointed Galois coverings} with respect to a chosen base-object $b_0$ of $\B$. Objects of $\galcov (\B,b_0)$ are Galois coverings $(F,c):\C \to\B$ where $c$ is an object in the fibre of $b_0$ under $F$.  A morphism $H$ from $(F,c)$ to $(G,d)$ is a functor $H$ such that $GH=F$ and $Hc=d$.

\begin{thm}\label{galcovB and galcovQB}
Let $\B$ be a connected category and let $Q\B$ be its category of fractions. The categories $\galcov_\B$ and $\galcov_{Q\B}$ are equivalent, as are the categories $\galcov(\B,b_0)$ and $\galcov(Q\B,b_0)$ for a fixed base object $b_0$.
\end{thm}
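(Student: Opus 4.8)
The plan is to build the equivalence $\galcov_\B \simeq \galcov_{Q\B}$ directly out of the functor $Q$, using the canonical form for Galois coverings (Theorem~\ref{galois are quotients}) together with the compatibility of $Q$ with free group actions (Theorem~\ref{Qgalois}). First I would define the functor $\Phi:\galcov_\B\to\galcov_{Q\B}$ on objects by sending a Galois covering $F:\C\to\B$ to $QF:Q\C\to Q\B$; by the final remark of Section~\ref{groupoid}, $Q$ is functorial on small categories, so a map of coverings $H:\C\to\D$ satisfying $GH=F$ yields $QH:Q\C\to Q\D$ with $(QG)(QH)=QF$, giving a map of coverings $QF\to QG$. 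That $\Phi(F)$ really is a Galois covering is exactly the content of Theorem~\ref{Qgalois}: writing $F$ in its canonical form $P:\C\to\C/\aut F$ via Theorem~\ref{galois are quotients}, Theorem~\ref{Qgalois} tells us $QP$ is a Galois covering with the same automorphism group $\aut F$, and that $Q\C/\aut F\cong Q(\C/\aut F)=Q\B$. So $\Phi$ is well-defined and respects the pointed structure (it is the identity on objects, hence sends an object $c$ in the fibre of $b_0$ to an object of $Q\C$ in the fibre of $b_0$), giving the pointed version for free.

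Next I would construct a quasi-inverse $\Psi:\galcov_{Q\B}\to\galcov_\B$ using the pullback construction recalled just before the statement. Given a Galois covering $G:\G\to Q\B$, form the pullback of $G$ along $Q_\B:\B\to Q\B$; since $\B$ and $Q\B$ share the same objects and $Q_\B$ is the identity on objects, the Proposition immediately preceding the theorem shows the pullback projection $F:\B\times_{Q\B}\G\to\B$ is a Galois covering with $\aut F=\aut G$. Set $\Psi(G)=F$. On morphisms, the universal property of the pullback produces the required map of coverings, and again the construction is the identity on objects so it is compatible with base-points.

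The heart of the argument, and the step I expect to be the main obstacle, is exhibiting the natural isomorphisms $\Phi\Psi\cong\id$ and $\Psi\Phi\cong\id$. For $\Phi\Psi$ one must check that applying $Q$ to the pullback $\B\times_{Q\B}\G$ recovers $\G$ up to canonical isomorphism over $Q\B$; here the key point is that $Q$ of a pullback along $Q_\B$ collapses back onto $\G$ because every morphism of $\G$ is already invertible (being a covering of a groupoid, $\G$ is a groupoid by the Remark in Section~\ref{coverings}), so the extra non-invertible data of $\B$ is formally inverted by $Q$ and identified via the pullback relations. For $\Psi\Phi$ one must produce a canonical isomorphism $\C\cong\B\times_{Q\B}Q\C$ over $\B$; I would verify this by checking the defining universal property of the pullback, using that $F$ is a covering (so its liftings of morphisms are single morphisms, not sums, as emphasized in the Remark following Theorem~\ref{galois are quotients}) to define the comparison functor and confirm it is bijective on stars and on objects. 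The careful bookkeeping is in making these comparison functors strictly commute with the structure functors and the projections from Theorem~\ref{Qgalois}, and in confirming that both isomorphisms are natural in the covering; once the object-level isomorphisms are pinned down canonically, naturality follows from the uniqueness clauses in the various universal properties (of $Q$, of the orbit category, and of the pullback).
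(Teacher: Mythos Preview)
Your plan is correct and follows essentially the same route as the paper: define the forward functor by applying $Q$ (invoking Theorem~\ref{Qgalois} for well-definedness) and the backward functor by pulling back along $Q_{\B}$ (invoking the two Propositions immediately preceding the statement), then check they are mutually pseudo-inverse. The paper's own proof is extremely terse---it simply asserts that the verification of pseudo-inverseness is ``straightforward''---so your discussion of the comparison isomorphisms $\C\cong \B\times_{Q\B}Q\C$ and $Q(\B\times_{Q\B}\G)\cong \G$ actually fills in detail the paper omits, and your outline of how to obtain them (via universal properties and the star-bijectivity of coverings) is the right idea.
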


\begin{proof}
We have already noted that applying the functor $Q$ to a Galois covering gives a Galois covering.  Conversely the pullback considered above provides a Galois covering of $\B$ from a Galois covering of $Q\B$. It is straightforward to verify that these functors are mutually pseudo-inverse. \qed
\end{proof}

\begin{rem}
A more general result is true.  Let $\cat{Cov}_{\B}$ be the category of all connected coverings of $\B$ with morphisms maps of coverings.  Then the functor $\textbf{Cov}_{\B}\to \textbf{Cov}_{Q\B}$ sending $F\to QF$ is an equivalence of categories.  Since our main interest in this paper is Galois coverings, we do not prove this here.
\end{rem}

A Galois covering is \textbf{universal} if it covers uniquely any other Galois covering. J.P. May proved in \cite[Chap. 3 Section 6]{May} that connected groupoids admit universal covers.  We infer the following

\begin{cor} (see \cite{OJ,ta})
Let $\B$ be a small connected category with chosen base-object $b_0$.  The category $\galcov(\B,b_0)$ admits a universal covering of $\B$.
\end{cor}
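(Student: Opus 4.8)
The plan is to combine the two principal results already established in this section. By Theorem~\ref{galcovB and galcovQB}, the pointed category $\galcov(\B,b_0)$ is equivalent to the pointed category $\galcov(Q\B,b_0)$ of Galois coverings of the fundamental groupoid. Since equivalences of categories preserve universal (i.e. initial or weakly initial) objects, it suffices to produce a universal covering in $\galcov(Q\B,b_0)$ and transport it back along the pseudo-inverse equivalence. Thus the entire problem is reduced to the groupoid case.

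For the groupoid case I would invoke the result of J.P.~May cited above: since $\B$ is connected, so is $Q\B$, and a connected groupoid admits a universal cover by \cite[Chap.~3 Section~6]{May}. Concretely, I would take May's universal covering $\widetilde{Q\B}\to Q\B$, choose a base-object $\tilde{b}_0$ in the fibre over $b_0$, and observe that this pointed Galois covering is universal in $\galcov(Q\B,b_0)$ -- that is, for any pointed Galois covering $(G,d)$ of $Q\B$ it covers $(G,d)$ by a unique pointed map of coverings. The pointedness is exactly what rigidifies the covering map and makes it unique, matching the definition of morphism in $\galcov(Q\B,b_0)$.

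Finally I would apply the equivalence $\galcov(\B,b_0)\simeq\galcov(Q\B,b_0)$ in reverse. The pullback construction preceding Theorem~\ref{galcovB and galcovQB} sends the universal pointed covering of $Q\B$ to a pointed Galois covering of $\B$; because the equivalence preserves the covering relation and the uniqueness of covering maps, the image is again universal in $\galcov(\B,b_0)$. Hence $\galcov(\B,b_0)$ admits a universal covering of $\B$.

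The main obstacle I anticipate is the precise bookkeeping of \emph{pointedness} through the equivalence: one must check that the equivalence of Theorem~\ref{galcovB and galcovQB} is genuinely an equivalence of the \emph{pointed} categories (sending the distinguished fibre object over $b_0$ on one side to the distinguished fibre object over $b_0$ on the other), so that the universal property -- existence and \emph{uniqueness} of a covering map to every other object -- is faithfully transported. Once this compatibility is confirmed, the universality of May's cover immediately yields the corollary, so no further calculation is required.
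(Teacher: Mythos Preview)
Your approach is correct and matches the paper's: the corollary is presented there as an immediate consequence of Theorem~\ref{galcovB and galcovQB} together with May's existence result for universal covers of connected groupoids, which is precisely the reduction-and-transport argument you outline. The paper also notes that a separate constructive proof is given later (Theorem~\ref{universal}), but the inference of the corollary itself is exactly yours.
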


We will also give a constructive proof of this fact in Theorem \ref{universal}.

Next we will prove that the category of pointed Galois coverings is \textbf{sharp}, meaning that each morphism set has at most one element. Note that a sharp category is precisely a preorder between its objects \emph{i.e.} any skeleton of the category is a partially ordered set. The following two results are originated in the work by P. Lemeur in the linear context, see \cite{le,le1,le2}.

\begin{pro}\label{equal}
Let $F:\C\to \B$ and $G:\D\to \B$ be connected coverings of a category $\B$. Let $H$ and $H'$ be two morphisms from $F$ to $G$ which coincide on some object. Then $H=H'$. In particular the category of pointed connected coverings is sharp.
\end{pro}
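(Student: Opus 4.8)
The plan is to prove that two maps of coverings $H, H' : F \to G$ that agree on a single object must agree everywhere, exploiting the fact that a covering induces bijections on stars. The key observation is that if $H$ and $H'$ agree on an object $c$ of $\C$, then the uniqueness of star-lifting forces them to agree on every morphism out of (or into) $c$, and hence on every object adjacent to $c$; connectedness of $\C$ then propagates this agreement throughout the whole category.

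First I would set up the local step. Suppose $Hc = H'c$ for some object $c$ of $\C$, and let $\beta \in {}_{c'}\C_{c}$ be any morphism with source $c$. Since $GH = F = GH'$, both $H(\beta)$ and $H'(\beta)$ are morphisms of $\D$ lying in the $G$-fibre over $F(\beta)$, i.e. $G(H\beta) = F(\beta) = G(H'\beta)$. Moreover $H(\beta)$ has source $Hc$ and $H'(\beta)$ has source $H'c = Hc$, so both $H(\beta)$ and $H'(\beta)$ lie in the source star at the common object $Hc = H'c$ and map to the same element $F(\beta)$ of the star at $G(Hc) = F(c)$. Because $G$ is a covering, the induced map from the source star at $Hc$ to the source star at $G(Hc)$ is a bijection, so $H(\beta) = H'(\beta)$. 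In particular their targets coincide, giving $H(c') = H'(c')$ for the target $c'$ of $\beta$. The identical argument applied to the target stars handles morphisms $\beta \in {}_{c}\C_{c''}$ with target $c$, yielding $H(\beta) = H'(\beta)$ and $H(c'') = H'(c'')$.

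Next I would globalize using connectedness. Let $W$ be the set of objects $x$ of $\C$ on which $H$ and $H'$ agree. The local step shows that $W$ is closed under passing to any object joined to a member of $W$ by a single morphism in either direction, i.e. $W$ is a union of connected components of the underlying undirected graph of $\C$. Since $\C$ is connected and $W$ is nonempty (it contains the given object), we conclude $W = \C_0$, so $H$ and $H'$ agree on all objects. The local step then shows they agree on every morphism as well (each morphism has its source in $W = \C_0$), so $H = H'$. Finally, for the "in particular" clause: a morphism in the category of pointed connected coverings from $(F,c)$ to $(G,d)$ is by definition a map of coverings sending $c$ to $d$, so any two such morphisms agree on the base object $c$ and are therefore equal by what we have proved; hence each hom-set has at most one element and the category is sharp.

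The main obstacle is purely bookkeeping rather than conceptual: one must be careful to invoke the covering bijections for $G$ (not $F$) on the correct stars, and to treat both source and target stars so that the agreement spreads along edges in both directions, which is exactly what makes the undirected-connectedness argument go through. No single step is hard, but the cleanest presentation isolates the one-morphism propagation lemma and then runs a standard connectedness sweep.
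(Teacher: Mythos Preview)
Your proof is correct and follows essentially the same approach as the paper: use the star-bijectivity of $G$ to show that agreement at an object propagates along any morphism touching it, then invoke connectedness of $\C$. Your write-up is a bit more explicit (defining the set $W$, handling source and target stars separately, and spelling out the ``in particular'' clause), but the argument is the same.
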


\begin{proof}
Let $c$ be an object of $\C$ such that $Hc=H'c$ and let $f$ be a morphism having source or target object $c$, say $s(f)=c$. We assert that $H(f)=H'(f)$.  Note that $GH(f)$ and $GH'(f)$ are equal since both coincide with $F(f)$. Hence  $H(f)$ and $H(f')$ are morphisms with the same source in $\D$ and having the same image under $G$.  Bijectivity of $G$ on stars now ensures that $H(f)=H'(f)$.  As a consequence $H(t(f))=H'(t(f))$. Since $\C$ is connected the result follows.\qed
\end{proof}

\begin{cor}\label{lambda}
Let $F:\C\to \B$ and $G:\D\to \B$ be Galois coverings of a category $\B$ with automorphism groups $\Gamma$ and $\Gamma'$ respectively. Let $H$ be a morphism from $F$ to $G$. Then
\begin{itemize}
\item
There is a unique surjective group homomorphism $\lambda_H:\Gamma\to \Gamma'$ such that $H\gamma=\lambda_H(\gamma)H$ for all $\gamma$ in $\Gamma$.
\item
$H$ is a Galois covering with automorphism group $\ker \lambda_H$.
\end{itemize}
\end{cor}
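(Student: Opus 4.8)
The plan is to construct $\lambda_H$ one element at a time, exploiting the rigidity of connected coverings, and then to read off its kernel and image. First I would fix a base object $c_0$ of $\C$ and set $d_0=Hc_0$. For $\gamma\in\Gamma$, note that $H\gamma$ is again a map of coverings from $F$ to $G$, since $G(H\gamma)=(GH)\gamma=F\gamma=F$ (as $\gamma\in\aut F$). Moreover $H\gamma(c_0)$ and $d_0$ lie in the same $G$-fibre, because $GH\gamma(c_0)=F(\gamma c_0)=F(c_0)=G(d_0)$. As $G$ is a Galois covering, $\Gamma'$ acts transitively on this fibre, so there is $\gamma'\in\Gamma'$ with $\gamma' d_0=H\gamma(c_0)$, and freeness of the $\Gamma'$-action makes $\gamma'$ the unique such element. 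Both $\gamma'H$ and $H\gamma$ are maps of coverings $F\to G$ (for $\gamma'H$ one has $G(\gamma'H)=(G\gamma')H=GH=F$) and they agree at $c_0$, so Proposition \ref{equal} forces $\gamma'H=H\gamma$ on all of $\C$. I then define $\lambda_H(\gamma)=\gamma'$.

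Next I would verify the homomorphism property, surjectivity, and uniqueness. From $H(\gamma_1\gamma_2)=(H\gamma_1)\gamma_2=(\lambda_H(\gamma_1)H)\gamma_2=\lambda_H(\gamma_1)\lambda_H(\gamma_2)H$ and the uniqueness in the defining relation, I get $\lambda_H(\gamma_1\gamma_2)=\lambda_H(\gamma_1)\lambda_H(\gamma_2)$; uniqueness of $\lambda_H$ as a whole follows by evaluating any competitor at $c_0$ and using freeness. For surjectivity I would first record that $H$ is itself a covering: the relation $GH=F$ together with the bijectivity of $F$ and $G$ on stars forces $H$ to be bijective on the star at each object of $\C$, and since $\D$ is connected (being the domain of a Galois covering) surjectivity of $H$ on objects is then automatic. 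Given $\gamma'\in\Gamma'$, the object $\gamma' d_0$ is therefore $Hc$ for some $c\in\C_0$; since $Fc=G(\gamma' d_0)=Gd_0=Fc_0$, transitivity of $\Gamma$ on the $F$-fibre gives $\gamma$ with $c=\gamma c_0$, and then $\lambda_H(\gamma)d_0=H\gamma(c_0)=\gamma' d_0$ yields $\lambda_H(\gamma)=\gamma'$ by freeness.

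For the second assertion I would identify $\aut H$ with $\ker\lambda_H$. If $\gamma\in\ker\lambda_H$ then $H\gamma=\lambda_H(\gamma)H=H$, so $\gamma\in\aut H$; conversely any $K\in\aut H$ satisfies $FK=GHK=GH=F$, hence $K\in\Gamma$, and $HK=H$ forces $\lambda_H(K)=\id$ by uniqueness, so $K\in\ker\lambda_H$. It then remains to show that $H$ is Galois, i.e. that $\ker\lambda_H$ acts transitively on some $H$-fibre, for which I take the fibre over $d_0$. Any $c$ with $Hc=d_0$ satisfies $Fc=Fc_0$, so $c=\gamma c_0$ for some $\gamma\in\Gamma$ by transitivity of $\Gamma$ on the $F$-fibre, and $Hc=\lambda_H(\gamma)d_0=d_0$ forces $\lambda_H(\gamma)=\id$ by freeness; hence $c$ lies in the $\ker\lambda_H$-orbit of $c_0$. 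Thus $\ker\lambda_H$ acts transitively on the $H$-fibre over $d_0$, and $H$ is a Galois covering with automorphism group $\ker\lambda_H$.

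The crux of the whole argument is the well-definedness of $\lambda_H(\gamma)$: manufacturing a single \emph{global} automorphism $\gamma'$ of $\D$ out of the purely local matching $\gamma' d_0=H\gamma(c_0)$ at the base object. This is exactly where the rigidity of connected coverings in Proposition \ref{equal} is indispensable, since a priori one only knows that $\gamma'H$ and $H\gamma$ agree at one object; freeness and transitivity of the fibre actions then supply the element $\gamma'$ and its uniqueness, and every remaining step (the homomorphism law, surjectivity, and the kernel computation) is routine bookkeeping on top of these two properties.
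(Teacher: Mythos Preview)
Your proof is correct and follows essentially the same approach as the paper: both arguments hinge on Proposition~\ref{equal} to promote agreement at a single object to equality of morphisms $F\to G$, use transitivity and freeness of the $\Gamma'$-action on fibres to define $\lambda_H$, and deduce the covering and Galois properties of $H$ from the star-bijectivity of $F$ and $G$. Your version is slightly more explicit in fixing a base point and in checking that every element of $\aut H$ actually lies in $\Gamma$, but the substance is the same.
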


\begin{proof}
Note that if an object $d$ is in the image of $H$ (say $d=Hc$) then the star at $d$ is also in the image of $H$.  To see this consider $\delta$ in the star at $d$ and its image $G(\delta)$ in the star at $GHc=Fc$.  Since $F$ is surjective on stars there is $\varepsilon$ in the star at $c$ mapping to $G(\delta)$ under $H$.  Thus $GH(\varepsilon)=G(\delta)$ so $H(\varepsilon)=\delta$ since $G$ is injective on stars. Since $\D$ is connected, we infer  that $H$ is surjective on objects.

The composition $H\gamma$ is a morphism from $F$ to $G$.  The previous proposition ensures that the map $\gamma\mapsto H\gamma$ is injective. Moreover any morphism $H'$ from $F$ to $G$ is of this form since there is a $\gamma\in\Gamma$ such that $H\gamma$ and $H'$ coincide on some object, so they are equal.  Similarly $\tau H$ is also a morphism for any $\tau\in\Gamma'$ and any morphism can be written uniquely in this form.  We thus have a well defined and unique map $\lambda_H:\Gamma\to\Gamma'$ such that $H\gamma=\lambda_H(\gamma)H$.  One can verify that $\lambda_H$ is a group morphism by direct inspection.  It is surjective by the above results.

In order to prove that $H$ is a covering let $g$ be a fixed morphism of $\D$ and let $c$ be a fixed object in the $H$-fibre of the source of $g$. We consider $G(g)$ which has a (unique) $F$-pre-image $f$ with source $c$, hence $F(f)=G(g)$. Now $GH(f)=F(f)$ so that $H(f)$ and $g$ have the same image in $\B$ and the same source object. Since $G$ is a covering we have $H(f)=g$. Moreover $f$  is unique with this property since otherwise $G(g)$ would have two $F$-pre-images with a fixed source, but $F$ is a covering.

Observe that $\gamma \in \ker \lambda_H$ if and only if $H\gamma=H$.  In order to prove that $H$ is a Galois covering, let $c$ and $c'$ be objects in the same $H$-fibre, so in the same $GH$-fibre - that is, the same $F$-fibre. Since $F$ is a Galois covering there exists $\gamma\in\Gamma$ such that $\gamma c=c'$. Then $H\gamma$ and $H$ are both morphisms from $F$ to $G$ which coincide on $c$ so by the previous proposition $H\gamma=H$. \qed
\end{proof}

\section{Gradings and smash products}\label{gradings}

We will define next a Galois covering of $\B$ constructed from a grading of $\B$.  Let $\G$ be a groupoid.  A \textbf{groupoid grading} (often just called a grading) of $\B$ is a functor $X:\B\to \G$. By the universal property of the category of fractions, $X$ factors uniquely through $Q\B$ providing an associated grading $QX:Q\B\to\G$ of $Q\mathcal{B}$.

\begin{defi}
A  grading $X:\B\to\G$ is \textbf{effective} if $\G$ is connected, $X$ is bijective on objects and if $QX$ is full.
\end{defi}

\noindent We will show that effective gradings of $\B$ correspond to Galois coverings of $\B$. In particular the effective condition on gradings will correspond to the fact that a Galois covering is connected.

 We briefly recall some properties of groupoids, see for instance \cite{brGroupoids}.  If $\G$ is a groupoid, then a \textbf{subgroupoid} is a subcategory which is itself a groupoid.  A subgroupoid $\N$ of $\G$ is \textbf{normal} in $\G$ if $\N_0=\G_0$ and if for any objects $x,y\in \G_0$ and any $\gamma\in {}_y\N_x$  we have $\gamma^{-1} \left({}_y\N_y\right) \gamma\subseteq {}_x\N_x$.  A groupoid is said to be \textbf{totally disconnected} if there are no morphisms between distinct objects of the groupoid.  Let $\N$ be a totally disconnected normal subgroupoid of the groupoid $\G$.  The \textbf{quotient groupoid} $\G/\N$ is the quotient of $\G$ by the following ideal relation: two morphisms $f$ and $g$ from $x$ to $y$ are equivalent if and only if there exist $\beta\in {}_y\N_y$ and $\alpha\in{}_x\N_x$ such that $g=\beta f\alpha$.

Let $F:\G\to \mathcal{H}$ be a functor between groupoids that is injective on objects.  Then $\Im(F)$ is a subgroupoid of $\mathcal{H}$.  The \textbf{kernel} $\ker (F)$ of $F$ is the subcategory of $\G$ with objects $\G_0$ and morphisms $\{\gamma\in\G_1 \ \mid \  F(\gamma)=\id\}$.  Since $F$ is injective on objects, $\ker (F)$ is a totally disconnected normal subgroupoid of $\G$.  What is more, the expected isomorphism theorem (\cite[8.3.2]{brGroupoids}) holds: $\G/\ker(F)\iso \im(F)$.

\medskip

Given two gradings $X:\B\to \G$ and $Y:\B\to \mathcal{H}$ a \textbf{map of gradings} from $X$ to $Y$ is a functor $Z:\G\to \mathcal{H}$ such that $ZX=Y$.  The category $\cat{EffGrad}_{\B}$ has objects effective gradings of $\B$ and morphisms as above. If $X: \B\to \G$ is an effective grading then $\G\iso Q\B/\ker(QX)$.  Moreover the grading $QX$ of $Q\B$ is isomorphic to the projection functor $Q\B \to Q\B/\ker (QX)$.

\medskip

The following results are immediate

\begin{lem}\label{sharp gradings}\label{morphisms of gradings}
Let $X:\B\to Q\B/\M$ and $Y:\B\to Q\B/\N$ be effective gradings of $\B$.  There is a map of gradings $Z:X\to Y$ if and only if $\M$ is a subgroupoid of $\N$, in which case $Z$ is the projection functor and hence unique.  In particular, the category of effective gradings of a connected category is sharp.\qed
\end{lem}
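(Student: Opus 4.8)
The plan is to identify both effective gradings with projections of $Q\B$ and reduce the whole statement to one observation: a functor out of $Q\B$ is completely determined by its composite with $Q_{\B}$. Since $\M$ and $\N$ are totally disconnected normal subgroupoids of $Q\B$, the remarks preceding the lemma let me take $QX$ to be the projection $\pi_{\M}:Q\B\to Q\B/\M$ and $QY$ the projection $\pi_{\N}:Q\B\to Q\B/\N$, so that $X=\pi_{\M}Q_{\B}$ and $Y=\pi_{\N}Q_{\B}$. The key preliminary fact I would record is that, because $Q\B$ is a category of fractions, every one of its morphisms is a composite of morphisms $Q_{\B}(\beta)$ and their inverses; hence any two functors out of $Q\B$ that agree on objects and on each $Q_{\B}(\beta)$ coincide. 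In particular, a map of gradings $Z:Q\B/\M\to Q\B/\N$ with $ZX=Y$ forces $Z\pi_{\M}=\pi_{\N}$ on all of $Q\B$, since the functors $Z\pi_{\M}$ and $\pi_{\N}$ agree after precomposition with $Q_{\B}$.

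For the direction assuming $\M\subseteq\N$, I would argue that $\pi_{\N}$ sends every morphism of $\M$ to an identity, because it already kills $\N\supseteq\M$. Consequently $\pi_{\N}$ respects the ideal relation defining $Q\B/\M$: if $g=\beta f\alpha$ with $\alpha,\beta\in\M$ then $\pi_{\N}(g)=\pi_{\N}(f)$. By the universal property of the quotient groupoid it therefore factors uniquely as $\pi_{\N}=Z\pi_{\M}$ for a functor $Z$, which is exactly the projection $Q\B/\M\to Q\B/\N$. Composing with $Q_{\B}$ gives $ZX=Z\pi_{\M}Q_{\B}=\pi_{\N}Q_{\B}=Y$, so $Z$ is a map of gradings.

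Conversely, given any map of gradings $Z$, the preliminary fact yields $Z\pi_{\M}=\pi_{\N}$. Then for $\gamma\in\M$ we get $\pi_{\N}(\gamma)=Z\pi_{\M}(\gamma)=Z(\id)=\id$, so $\gamma\in\ker(\pi_{\N})=\N$; that is, $\M\subseteq\N$. Uniqueness of $Z$ is immediate from the same fact: $\pi_{\M}$ is a projection and hence full, so the identity $Z\pi_{\M}=\pi_{\N}$ determines $Z$ on every morphism of $Q\B/\M$, and $Z$ is the projection. Sharpness of the category of effective gradings follows at once, since a morphism $X\to Y$ exists precisely when $\M\subseteq\N$ and is then unique.

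I expect the only real subtlety to be the preliminary generation fact together with the identification $\ker(\pi_{\N})=\N$. The former is what lets me upgrade agreement along $Q_{\B}$ to agreement on all of $Q\B$, and it is the single place where the category-of-fractions structure is genuinely used; the latter relies on $\N$ being totally disconnected, so that $\pi_{\N}(\gamma)=\id$ is equivalent to $\gamma\in\N$. Everything else is a routine application of the universal property of the quotient groupoid and of the isomorphism theorem $\G/\ker(QX)\iso\im(QX)$ already recorded above.
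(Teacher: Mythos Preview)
Your argument is correct. The paper itself offers no proof at all: the lemma is preceded by ``The following results are immediate'' and closed with a bare \qed, so there is nothing to compare at the level of detail. What you have written is exactly the natural unpacking of that ``immediate'': identify $X$ and $Y$ with $\pi_{\M}Q_{\B}$ and $\pi_{\N}Q_{\B}$ via the isomorphism theorem recorded just before the lemma, use the uniqueness clause of the universal property of $Q\B$ (Theorem~\ref{UP of Q}) to promote $Z\pi_{\M}Q_{\B}=\pi_{\N}Q_{\B}$ to $Z\pi_{\M}=\pi_{\N}$, and read off $\M\subseteq\N$ from $\ker(\pi_{\N})=\N$. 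The only small remark is that you need not phrase the preliminary fact as ``every morphism of $Q\B$ is a composite of $Q_{\B}(\beta)$'s and their inverses'': the uniqueness part of Theorem~\ref{UP of Q} already gives you directly that two functors from $Q\B$ into a groupoid agreeing after $Q_{\B}$ must coincide, which is cleaner and avoids any appeal to the explicit construction of $Q\B$.
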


\begin{pro}\label{universal grading}
The grading of $\B$ given by its category of fractions $Q_\B:\B\to Q\B$ is effective and universal, in the sense that any other effective grading is uniquely a quotient of it.\qed
 \end{pro}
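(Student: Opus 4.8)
The plan is to verify the two assertions separately, each reducing to the universal property of the category of fractions (Theorem \ref{UP of Q} with $S=\B_1$). For effectiveness I would check the three defining conditions. First, $Q\B$ is connected because $\B$ is connected, as observed just after the construction of $Q\B$. Second, $Q_\B$ is by construction the identity on objects, hence bijective on objects. Third, I must show that the associated grading of $Q_\B$ --- the unique functor $Q\B\to Q\B$ through which $Q_\B$ factors --- is full. The key observation is that this associated grading is the unique functor $W:Q\B\to Q\B$ satisfying $WQ_\B=Q_\B$; since $\id_{Q\B}$ also satisfies $\id_{Q\B}Q_\B=Q_\B$, the uniqueness clause of Theorem \ref{UP of Q} forces $W=\id_{Q\B}$, which is certainly full.

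For universality, let $Y:\B\to\mathcal{H}$ be an arbitrary effective grading. Since $\mathcal{H}$ is a groupoid, every morphism $Y(\beta)$ is invertible, so Theorem \ref{UP of Q} yields a unique functor $Z:Q\B\to\mathcal{H}$ with $ZQ_\B=Y$. This $Z$ is exactly a map of gradings from $Q_\B$ to $Y$, and its uniqueness is precisely the uniqueness in the universal property; thus $Y$ receives a unique map of gradings from $Q_\B$. To see that $Y$ is genuinely a \emph{quotient} of $Q_\B$, note that $Z$ coincides with the associated grading $QY$ and is therefore full by effectiveness of $Y$, while $Z$ is bijective on objects because on objects it agrees with $Y$ (as $Q_\B$ is the identity on objects). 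A functor that is full and bijective on objects is surjective, so $\im Z=\mathcal{H}$, and the isomorphism theorem for groupoids $\G/\ker(F)\iso\im(F)$ then gives $Q\B/\ker Z\iso\mathcal{H}$ with $Z$ the associated projection.

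I anticipate no genuine obstacle: both halves follow directly from the universal property of $Q\B$, and the remaining work is bookkeeping. The only points demanding a little care are the identification of the associated grading of $Q_\B$ with $\id_{Q\B}$, and the translation of the word ``quotient'' into surjectivity of $Z$ together with the groupoid isomorphism theorem recalled above.
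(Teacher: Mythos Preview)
Your argument is correct. The paper itself gives no proof for this proposition (it is marked as ``immediate'' together with Lemma \ref{sharp gradings}), and your direct appeal to the universal property of $Q\B$---identifying $Q(Q_\B)$ with $\id_{Q\B}$ for effectiveness, and producing the unique map of gradings $Z=QY$ for universality---is precisely what makes the result immediate. One could alternatively read the paper as intending Proposition \ref{universal grading} to be a formal consequence of Lemma \ref{sharp gradings}: since $Q(Q_\B)=\id_{Q\B}$ has trivial kernel, the grading $Q_\B$ corresponds to the trivial normal subgroupoid $\M$, and then for any effective $Y\iso(\B\to Q\B/\N)$ the containment $\M\subseteq\N$ yields the unique projection map. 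Your route bypasses this repackaging and goes straight to Theorem \ref{UP of Q}, which is slightly more self-contained; the two arguments are of course the same computation viewed from different angles.
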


Given a linear grading of a category over a field, a categorical linear smash product has been considered as a generalization of the smash product of a graded algebra by a group, see \cite{CM}. Next we will consider an analogous smash product category associated to a groupoid grading.

\begin{defi}
Let $X:\B\to \G$ be a grading of a  category $\B$ and let $x_0$ be an object of $\G$.  The objects of the \textbf{smash product} category $\B\#_{x_0}X$ are pairs $(b,\gamma)$ where $b\in\B_0$ and $\gamma\in {}_{x_0}\G_{X\!b}$.  A morphism from $(b,\gamma)$ to $(c,\delta)$ is a morphism $f\in {}_c\B_b$ such that $\delta X(f)=\gamma$ or equivalently a morphism of $\B$ which has $X$-degree $\delta^{-1}\gamma$, where the degree of a morphism is its image by $X$.
\end{defi}

The smash product is an example of a ``comma category'' and  S. Mac Lane refers to $\B\#_{x_0}X$ as the category of ``objects $X$-over $x_0$'' (see \cite[II.6]{MacLane}).

Note that if $\B$ is a groupoid, the smash product is also a groupoid.  We next show that if $X$ is an effective grading of $\mathcal{B}$ then $\mathcal{B}\#_{x_0}X$ is a Galois covering of $\mathcal{B}$.

\begin{lem}
Let $X:\mathcal{B}\to \mathcal{G}$ be a grading that is bijective on objects and let $Y:\mathcal{B}\to \im (QX)$ be the corestriction of $X$ to the image of $QX$ (so that $Y$ is effective, \emph{i.e.} $QY$ is full).
Let $\mathcal{H}$ be the category
$$\coprod_{\alpha\in A}{}_\alpha\!\left(\mathcal{B}\#_{x_0}Y\right)$$
where $A$ is a set of left-coset representatives of ${}_{x_0}\im (QX)_{x_0}$ in ${}_{x_0}\mathcal{G}_{x_0}$ and ${}_\alpha\!\left(\mathcal{B}\#_{x_0}Y\right)$ is a copy of $\mathcal{B}\#_{x_0}Y$ .  Then $\mathcal{H}$ is isomorphic to $\mathcal{B}\#_{x_0}X$.
\end{lem}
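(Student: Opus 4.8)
The plan is to exhibit an explicit isomorphism of categories $\Phi\colon \mathcal{H}\to \mathcal{B}\#_{x_0}X$. Abbreviate $G={}_{x_0}\mathcal{G}_{x_0}$ and let $K={}_{x_0}\im(QX)_{x_0}$, a subgroup of $G$, so that $A$ indexes the decomposition $G=\coprod_{\alpha\in A}\alpha K$ into left cosets (note $x_0\in\im(QX)_0=\mathcal{G}_0$ since $X$ is bijective on objects, so each smash product is defined). On objects I would send the object $(b,\gamma')$ of the copy ${}_\alpha(\mathcal{B}\#_{x_0}Y)$ — where $b\in\mathcal{B}_0$ and $\gamma'\in{}_{x_0}\im(QX)_{Xb}$ — to the pair $(b,\alpha\gamma')$, noting $\alpha\gamma'\in{}_{x_0}\mathcal{G}_{Xb}$ is a genuine object of $\mathcal{B}\#_{x_0}X$. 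First I would verify bijectivity on objects: for surjectivity, given $(b,\gamma)$ one uses that $\im(QX)$ is connected (part of $Y$ being effective) to pick some $\gamma'_0\in{}_{x_0}\im(QX)_{Xb}$, writes $\gamma(\gamma'_0)^{-1}=\alpha k$ with $\alpha\in A$ and $k\in K$, and recovers the preimage $(b,k\gamma'_0)$ in the copy indexed by $\alpha$. Both injectivity and independence of the auxiliary choice of $\gamma'_0$ reduce to the uniqueness of the factorisation $g=\alpha k$.

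Next I would treat morphisms. In $\mathcal{H}$ a morphism exists only within a single copy ${}_\alpha(\mathcal{B}\#_{x_0}Y)$, where it is an arrow $f\in{}_c\mathcal{B}_b$ with $\delta'X(f)=\gamma'$; left-multiplying by $\alpha$ turns this into $(\alpha\delta')X(f)=\alpha\gamma'$, so the same underlying $f$ is a morphism $(b,\alpha\gamma')\to(c,\alpha\delta')$ of $\mathcal{B}\#_{x_0}X$, and $\Phi$ sends it there. The main obstacle is the converse: given a morphism $f$ of $\mathcal{B}\#_{x_0}X$ from $(b,\gamma)$ to $(c,\delta)$, I must show it comes from one copy, i.e. that the coset representatives attached to its source and target agree. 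The fact that unlocks this is that $X(f)\in\im(QX)$ for every arrow $f$ of $\mathcal{B}$, since $X$ factors as $QX\circ Q_{\mathcal{B}}$. Writing the source as $(b,\alpha\gamma')$ and target as $(c,\alpha'\delta')$ with $\gamma',\delta'\in\im(QX)$, the defining relation $\delta X(f)=\gamma$ reads $\alpha'\delta'X(f)=\alpha\gamma'$, hence $\alpha^{-1}\alpha'=\gamma'X(f)^{-1}(\delta')^{-1}$ is a loop at $x_0$ built from elements of $\im(QX)$, so it lies in $K$. Thus $\alpha'\in\alpha K$, forcing $\alpha=\alpha'$, and cancelling then yields $\delta'X(f)=\gamma'$, the morphism condition inside the copy ${}_\alpha(\mathcal{B}\#_{x_0}Y)$.

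Finally, since composition and identities in both $\mathcal{H}$ and $\mathcal{B}\#_{x_0}X$ are computed through the underlying arrows of $\mathcal{B}$, and $\Phi$ leaves those arrows untouched, $\Phi$ is functorial; being bijective on objects and on morphisms it is an isomorphism of categories, as required. As a consistency check one can note that, for fixed $b$, the left-hand side contributes $[G:K]\cdot\lvert K\rvert=\lvert G\rvert$ objects, matching $\lvert{}_{x_0}\mathcal{G}_{Xb}\rvert=\lvert G\rvert$ on the right. I expect the only delicate point to be the bookkeeping with the non-canonical isos $\gamma'_0$ and the careful use of the identity $X(f)\in\im(QX)$; the rest is routine verification that the coset calculus matches the smash-product relations.
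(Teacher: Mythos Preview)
Your proof is correct and follows essentially the same route as the paper's: both define the functor $\mathcal{H}\to\mathcal{B}\#_{x_0}X$ by $(b,\gamma')\mapsto(b,\alpha\gamma')$ on objects and the identity on underlying $\mathcal{B}$-arrows, and both establish fullness via the coset computation showing $\alpha'^{-1}\alpha\in{}_{x_0}\im(QX)_{x_0}$. Your treatment is slightly more detailed, spelling out surjectivity on objects via connectedness of $\im(QX)$ where the paper simply asserts bijectivity on objects.
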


\begin{proof}
Denote by ${}_{\alpha}(b,\gamma)$ the object $(b,\gamma)$ of the copy of $\mathcal{B}\#_{x_0}Y$ labeled by $\alpha\in A$.  Define a functor $\eta:\mathcal{H}\to \mathcal{B}\#_{x_0}X$ as follows.  On objects $\eta$ sends ${}_{\alpha}(b,\gamma)$ to $(b,\alpha\gamma)$.  On morphisms, $\delta$ maps to $\delta$.  This is a well-defined functor (not independent of choice of $A$) and is bijective on objects.  It is clearly faithful, so we need only check it is full. First note that if $\alpha\neq\alpha'$ there are no morphisms in $\mathcal{B}\#_{x_0}X$ from $(b,\alpha\gamma)$ to $(c,\alpha'\gamma')$. Indeed, if $\delta$ is such a morphism, then $\alpha'^{-1}\alpha=\gamma'X(\delta)\gamma^{-1}\in \im (QX)$ so that $\alpha=\alpha'$.  Consider now $\delta\in {}_{(c,\alpha\gamma')}\left( \mathcal{B}\#_{x_0}X   \right)_{(b,\alpha\gamma)}$.  Then $X(\delta)=\gamma'^{-1}\gamma$ so that $\delta$ is the image under $\eta$ of $\delta$ as a morphism in ${}_\alpha\!\left(\mathcal{B}\#_{x_0}Y\right)$.\qed
\end{proof}

\begin{lem}
Let $X:\mathcal{B}\to \mathcal{G}$ be an effective grading of $\mathcal{B}$.  Then $\mathcal{B}\#_{x_0}X$ is connected.
\end{lem}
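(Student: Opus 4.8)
The plan is to choose a convenient base object of $\mathcal{B}\#_{x_0}X$ and show that every object is joined to it. Since $X$ is bijective on objects there is a unique $b_0\in\mathcal{B}_0$ with $X(b_0)=x_0$, so $\id_{x_0}\in{}_{x_0}\mathcal{G}_{X(b_0)}$ and $(b_0,\id_{x_0})$ is a genuine object of the smash product; I take it as the base. Connectedness will follow once I show that an arbitrary object $(b,\gamma)$, with $\gamma\in{}_{x_0}\mathcal{G}_{X(b)}$, is joined to $(b_0,\id_{x_0})$ by a sequence of morphisms of $\mathcal{B}\#_{x_0}X$ (traversed in either direction).

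The heart of the argument is a transport rule describing how the second coordinate evolves when one lifts a walk of $\mathcal{B}$ into the smash product. Directly from the definition, a morphism $f\in{}_v\mathcal{B}_u$ together with an object $(v,\delta)$ yields the smash-product morphism $f\colon(u,\delta X(f))\to(v,\delta)$; so for the purpose of connectivity the edge $f$ joins $(u,\delta X(f))$ to $(v,\delta)$, and the reversed edge joins them the other way. I would then check, by a short induction on the length of a walk $w$ from $b_0$ to $b$ in $\mathcal{B}$, that the lift of $w$ starting at $(b_0,\id_{x_0})$ terminates at $(b,\,QX(q)^{-1})$, where $q\in{}_b(Q\mathcal{B})_{b_0}$ is the class of $w$ in the fundamental groupoid. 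Indeed a single forward edge $f$ starting at $\id_{x_0}$ forces the second coordinate to be $X(f)^{-1}$, a reversed edge contributes the opposite factor, and these factors compose consistently because $QX$ is a functor; the endpoint depends on $w$ only through its class $q$, as it must.

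It therefore suffices to find a walk $w$ from $b_0$ to $b$ with $QX(q)=\gamma^{-1}$, for then its lift from $(b_0,\id_{x_0})$ ends at $(b,\gamma)$. This is exactly where effectiveness enters: as $QX\colon Q\mathcal{B}\to\mathcal{G}$ is full and, being induced by $X$, is bijective on objects, the map ${}_b(Q\mathcal{B})_{b_0}\to{}_{X(b)}\mathcal{G}_{x_0}$ is surjective. The element $\gamma^{-1}$ lies in ${}_{X(b)}\mathcal{G}_{x_0}$, so it has a preimage $q\in{}_b(Q\mathcal{B})_{b_0}$; since every morphism of $Q\mathcal{B}$ is a class of walks of $\mathcal{B}$, I may represent $q$ by an actual walk $w$. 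Lifting $w$ as above joins $(b_0,\id_{x_0})$ to $(b,\gamma)$, and as $(b,\gamma)$ was arbitrary, $\mathcal{B}\#_{x_0}X$ is connected. (The argument in fact uses only that $X$ is bijective on objects and that $QX$ is full.)

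The step I expect to be the main obstacle is pinning down the transport rule cleanly: keeping the inverses and the side of multiplication consistent so that concatenation of walk-segments corresponds, under $QX$, to composition in $\mathcal{G}$. Once the statement \emph{``the lift of a walk representing $q$, started at $(b_0,\id_{x_0})$, ends at $(b,QX(q)^{-1})$''} is established, the remainder is a direct invocation of the fullness of $QX$.
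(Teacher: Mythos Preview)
Your proof is correct and follows essentially the same route as the paper: use fullness of $QX$ to produce a morphism of $Q\mathcal{B}$ with prescribed $\mathcal{G}$-image, represent it by a walk in $\mathcal{B}$, and lift that walk step by step into $\mathcal{B}\#_{x_0}X$ using exactly the transport rule you describe. The only cosmetic difference is that the paper connects two arbitrary objects $(b,\alpha)$ and $(c,\gamma)$ directly (seeking a preimage of $\gamma^{-1}\alpha$), whereas you fix the base object $(b_0,\id_{x_0})$ and connect everything to it; your version is slightly more explicit about the induction on walk length, which the paper leaves as ``easy to check''.
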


\begin{proof}
Fix two objects $(b,\alpha)$ and $(c,\gamma)$ of $\mathcal{B}\#_{x_0}X$.  Since $QX$ is full there is a morphism $\delta$ of $Q\mathcal{B}$ mapping to $\gamma^{-1}\alpha$ under $QX$.  We know that  $\delta$ corresponds to a walk $(\delta_n,\hdots,\delta_1)$ from $b$ to $c$ in $\mathcal{B}$.  It is easy to check that we can choose objects in $\mathcal{B}\#_{x_0}X$ so that $(\delta_n,\hdots,\delta_1)$ can be considered as a walk in $\mathcal{B}\#_{x_0}X$ from $(b,\alpha)$ to $(c,\gamma)$ as needed -- for example, if $\delta_1$ were a ``reversed'' morphism of $\mathcal{B}$ then the beginning of the walk would be given by $\delta_1$ viewed as a morphism from  $\left(s(\delta_1),\alpha X(\delta_1)\right)$ to  $(b,\alpha)$.\qed
\end{proof}

From the previous two lemmas the following is immediate:

\begin{pro}
Let $X:\mathcal{B}\to \mathcal{G}$ be a grading of $\mathcal{B}$ that is bijective on objects.  Then $\mathcal{B}\#_{x_0}X$ is connected if and only if $X$ is effective.\qed
\end{pro}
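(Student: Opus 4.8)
The plan is to read the biconditional directly off the two preceding lemmas: the first presents $\mathcal{B}\#_{x_0}X$ as a disjoint union, and the second controls the connectedness of each summand. Throughout I use the standing hypothesis that $\mathcal{B}$ is connected, hence $Q\mathcal{B}$ is connected. Since $X$, and therefore $QX$, is bijective on objects, the image $\im(QX)$ is a connected subgroupoid of $\mathcal{G}$ containing every object of $\mathcal{G}$; consequently $\mathcal{G}$ is itself connected. Thus in the present situation the only substantive part of effectiveness is the fullness of $QX$, which is exactly the equality $\im(QX)=\mathcal{G}$. I will therefore show that $\mathcal{B}\#_{x_0}X$ is connected if and only if $\im(QX)=\mathcal{G}$.

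For the implication ``$X$ effective $\Rightarrow$ $\mathcal{B}\#_{x_0}X$ connected'' there is nothing new to prove: it is precisely the second lemma applied to $X$. Seen through the first lemma, fullness of $QX$ yields ${}_{x_0}\im(QX)_{x_0}={}_{x_0}\mathcal{G}_{x_0}$, so the indexing set $A$ is a singleton and $\mathcal{B}\#_{x_0}X\iso \mathcal{B}\#_{x_0}Y$, which is connected.

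For the converse I would use the component count. By the first lemma there is an isomorphism
$$\mathcal{B}\#_{x_0}X\ \iso\ \coprod_{\alpha\in A}{}_\alpha\!\left(\mathcal{B}\#_{x_0}Y\right),$$
where $A$ represents the left cosets of ${}_{x_0}\im(QX)_{x_0}$ in ${}_{x_0}\mathcal{G}_{x_0}$ and $Y$ is the (effective) corestriction of $X$ to $\im(QX)$. Since $Y$ is effective, the second lemma makes each summand $\mathcal{B}\#_{x_0}Y$ connected, and it is nonempty because $\mathcal{B}$ is. The connected components of a coproduct of categories are those of its summands, so the coproduct is connected exactly when there is a single summand, i.e. when $|A|=1$, that is, when ${}_{x_0}\im(QX)_{x_0}={}_{x_0}\mathcal{G}_{x_0}$. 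It then remains to promote this equality of vertex groups at the one object $x_0$ to the global equality $\im(QX)=\mathcal{G}$: given objects $x,y$ of $\mathcal{G}$, choose $p\in{}_x\im(QX)_{x_0}$ and $q\in{}_y\im(QX)_{x_0}$ (available since $\im(QX)$ is connected); then any $g\in{}_y\mathcal{G}_x$ satisfies $q^{-1}gp\in{}_{x_0}\mathcal{G}_{x_0}={}_{x_0}\im(QX)_{x_0}$, whence $g=q\,(q^{-1}gp)\,p^{-1}\in{}_y\im(QX)_x$. Hence $\im(QX)=\mathcal{G}$, so $QX$ is full and $X$ is effective.

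The one place demanding genuine care is this final ``spreading'' of the vertex-group equality across all hom-sets, which is exactly where the connectedness of $\im(QX)$ — and hence of $\mathcal{B}$ — is indispensable, and where the observation that $\mathcal{G}$ is automatically connected is recorded. Everything else is a direct combination of the two lemmas with the elementary behaviour of connected components under coproducts, which is why the statement is immediate once those lemmas are in hand.
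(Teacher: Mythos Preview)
Your argument is correct and is exactly the unpacking of the paper's one-line proof (``immediate from the previous two lemmas''): one direction is the second lemma verbatim, and for the converse you use the coproduct decomposition of the first lemma together with the connectedness of each summand to force $|A|=1$, then spread the vertex-group equality across all hom-sets via the connectedness of $\im(QX)$. Your explicit observation that $\mathcal{G}$ is automatically connected (so that effectiveness reduces to fullness of $QX$) and your careful ``spreading'' step are precisely the small details the paper leaves implicit.
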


We show next that the smash product construction provides Galois coverings. Consider the canonical functor $F_{X}: \B\#_{x_0}X\to\B$ given by $F_X(b,\gamma)=b$ and $F_X({}_{(b',\gamma')}\delta_{(b,\gamma)})=\delta$.

The group ${}_{x_0}\G_{x_0}$ acts on the smash product as follows: for $\alpha\in {}_{x_0}\G_{x_0}$ we have $\alpha(b,\gamma) = (b,\alpha\gamma)$ and $\alpha \delta=\delta$. Note that this action is free on the set of objects; the next result is clear.

\begin{pro}
Let $\B$ be a category and let $X:\B\to\G$ be a grading of $\B$. The quotient category $\left(\B\#_{x_0}X\right)/{}_{x_0}\G_{x_0}$ is canonically isomorphic to $\B$. If $X$ is effective then $F_X:\B\#_{x_0}X\to \B$ defined as above is a Galois covering of $\B$ with $\aut \left(F_X\right)={}_{x_0}\G_{x_0}$.\qed
\end{pro}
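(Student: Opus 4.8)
The plan is to prove the canonical isomorphism first and then read off the covering statement from the orbit-category facts already recorded above. I would begin by checking that the action of ${}_{x_0}\G_{x_0}$ on the objects of $\B\#_{x_0}X$ is free: if $\alpha(b,\gamma)=(b,\gamma)$ then $\alpha\gamma=\gamma$ in the groupoid $\G$, and multiplying on the right by $\gamma^{-1}$ forces $\alpha=\id_{x_0}$. Hence the orbit category $(\B\#_{x_0}X)/{}_{x_0}\G_{x_0}$ is defined. Since $F_X$ is constant on orbits -- on objects it forgets the second coordinate, on morphisms it returns the underlying arrow, while ${}_{x_0}\G_{x_0}$ only alters the second coordinate and fixes underlying arrows -- the universal property of the orbit category noted in the proof of Theorem \ref{Qgalois} yields a unique functor $I$ with $IP=F_X$, where $P$ is the projection. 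This $I$ is the canonical candidate, and it remains to verify that it is an isomorphism of categories.

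For this I would show that $I$ is bijective on objects and fully faithful. On objects, because $\G$ is a groupoid the group ${}_{x_0}\G_{x_0}$ acts transitively on ${}_{x_0}\G_{Xb}$ for each fixed $b$, so the orbit of $(b,\gamma)$ depends only on $b$; thus $I$ induces a bijection between object-orbits and $\B_0$. For hom-sets, a morphism from $(b,\gamma)$ to $(c,\delta)$ is a morphism $f\in{}_c\B_b$ with $\delta X(f)=\gamma$, and ${}_{x_0}\G_{x_0}$ acts on such data by translating $\gamma$ and $\delta$ simultaneously while fixing the underlying $f$. Given $f$ the label $\delta=\gamma X(f)^{-1}$ is determined by $\gamma$, and since ${}_{x_0}\G_{x_0}$ acts simply transitively on the possible $\gamma\in{}_{x_0}\G_{Xb}$, the morphisms lying over a fixed $f$ form a single orbit. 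Consequently the morphism-orbits from the $b$-orbit to the $c$-orbit are in bijection with ${}_c\B_b$ via $I$, so $I$ is fully faithful and hence an isomorphism. This establishes the first assertion.

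The second assertion then follows from the facts recorded for orbit categories: since ${}_{x_0}\G_{x_0}$ acts freely on objects, $P$ is a covering with automorphism group ${}_{x_0}\G_{x_0}$ acting transitively on fibres. As $I$ is an isomorphism and $F_X=IP$, the functor $F_X$ is likewise a covering with $\aut(F_X)={}_{x_0}\G_{x_0}$. Finally, when $X$ is effective it is in particular bijective on objects, so by the preceding proposition $\B\#_{x_0}X$ is connected; a covering with connected total category whose automorphism group acts transitively on a fibre is a Galois covering, giving the claim.

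The step I expect to be the main obstacle is the identification of the morphism-orbits in the fully-faithful check: one must track carefully that the action is trivial on underlying arrows but nontrivial on the labels $\gamma,\delta$, and the conclusion that each $f$ contributes exactly one orbit rests essentially on $\G$ being a groupoid, so that $\delta$ is forced by $\gamma$ and $f$ and the labels form a single torsor under ${}_{x_0}\G_{x_0}$. Everything else reduces to the orbit-category machinery and the connectedness proposition already in place.
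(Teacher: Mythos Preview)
Your argument is correct and is exactly the natural verification. Note that the paper itself gives no proof of this proposition: it is prefaced by ``the next result is clear'' and closed with a bare $\qed$, so your write-up is already considerably more detailed than the paper's own treatment. The route you take---check freeness, factor $F_X$ through the orbit projection to obtain the canonical $I$, verify $I$ is bijective on objects and fully faithful, then read off the Galois covering statement from the orbit-category facts together with the preceding connectedness proposition---is precisely the implicit justification the paper has in mind.

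One small caveat worth making explicit: your claim that $I$ is a \emph{bijection} on objects quietly needs ${}_{x_0}\G_{Xb}$ to be nonempty for every $b\in\B_0$, i.e.\ that $Xb$ and $x_0$ lie in the same component of $\G$. The bare hypotheses of the first sentence do not guarantee this, though it certainly holds in the effective case (where $\G$ is connected by definition) and in the surrounding context of the section. It would be cleaner to record that assumption, since otherwise the smash product may miss some objects of $\B$ entirely.
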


In order to describe morphisms between smash products, we first extend the surjective group morphism obtained in Proposition \ref{lambda} to the  grou\-poid. Let $X:\B\to\G$ and $Y : \B\to\mathcal{H} $ be effective gradings of $\B$ and let $H$ be a morphism of Galois coverings from $F_X$ to $F_Y$. We have proven the existence of a  surjective group homomorphism $\lambda_H : {}_{x_0}\G_{x_0}\to {}_{x_0}\mathcal{H}_{x_0}$. Let $\alpha_x$ be a chosen morphism from $x$ to $x_0$ for each object $x$ of $\G$, with $\alpha_{x_0}=1$. Then $\lambda_H$ extends to a functor $\lambda_H'$ from $\G$ to ${}_{x_0}\mathcal{H}_{x_0}$ by the formula $\lambda'_H (\epsilon) = \lambda_H \left( \alpha_{t(\epsilon)}\ \epsilon\  \alpha_{s(\epsilon)}^{-1}\right)$.

\begin{pro}\label{morphisms smashes}
In the situation above, for $b\in \B$ define a map $H_b:{}_{x_0}\G_{_{Xb}}\to {}_{x_0}\mathcal{H}_{_{Yb}}$ by setting $H(b,\gamma)=(b,H_b(\gamma))$ for $(b,\gamma)$ in $\B\#_{x_0}X$.  Then $H_b(\gamma)=\lambda'_H(\gamma)H_b(\alpha_{_{Xb}})$.  Moreover,
the image under $H$ of a morphism $f$ in $\B\#_{x_0}X$ from $(b,\gamma)$ to $(c,\delta)$

is $f$ itself and
$$Y(f)= H_c\left(\alpha_{_{Xc}}\right)^{-1} \ \lambda'\left(X(f)\right) \ H_b\left(\alpha_{_{Xb}}\right).$$
\end{pro}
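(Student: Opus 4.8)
The plan is to isolate the single substantive identity $H_b(\gamma)=\lambda'_H(\gamma)H_b(\alpha_{Xb})$ and to read off everything else from it. First I would dispatch the easy assertions. Since $H$ is a map of coverings, $F_Y H=F_X$, and both $F_X$ and $F_Y$ return the first coordinate of an object while acting as the identity on the $\B$-component of a morphism. Hence $H$ must preserve the first coordinate of every object, which is exactly what makes the maps $H_b:{}_{x_0}\G_{Xb}\to{}_{x_0}\mathcal{H}_{Yb}$ well defined; and it shows simultaneously that $H(f)$, viewed as a morphism of $\B$, equals $F_Y(H(f))=F_X(f)=f$, giving the second displayed assertion at once.

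Next I would prove the formula $H_b(\gamma)=\lambda'_H(\gamma)H_b(\alpha_{Xb})$ by exploiting the free action of ${}_{x_0}\G_{x_0}$ on $\B\#_{x_0}X$ together with the defining relation $H\beta=\lambda_H(\beta)H$ of the surjective homomorphism $\lambda_H$ (Corollary~\ref{lambda}). Both $\gamma$ and $\alpha_{Xb}$ lie in ${}_{x_0}\G_{Xb}$, so $\beta:=\gamma\alpha_{Xb}^{-1}$ belongs to the vertex group ${}_{x_0}\G_{x_0}$ and $(b,\gamma)=\beta\cdot(b,\alpha_{Xb})$. Applying $H$ and using $H\beta=\lambda_H(\beta)H$ evaluated at the object $(b,\alpha_{Xb})$ yields $H(b,\gamma)=\lambda_H(\beta)\cdot(b,H_b(\alpha_{Xb}))=(b,\lambda_H(\beta)H_b(\alpha_{Xb}))$, whence $H_b(\gamma)=\lambda_H(\beta)H_b(\alpha_{Xb})$. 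Finally, since $\alpha_{x_0}=1$ and $s(\gamma)=Xb$, the definition of the extension gives $\lambda'_H(\gamma)=\lambda_H(\alpha_{x_0}\gamma\alpha_{Xb}^{-1})=\lambda_H(\gamma\alpha_{Xb}^{-1})=\lambda_H(\beta)$, which is the claimed identity.

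For the expression for $Y(f)$, recall that a morphism $f$ from $(b,\gamma)$ to $(c,\delta)$ in $\B\#_{x_0}X$ satisfies $X(f)=\delta^{-1}\gamma$. By the first part $H(f)=f$, now regarded as a morphism from $(b,H_b(\gamma))$ to $(c,H_c(\delta))$ in $\B\#_{x_0}Y$; the defining condition for morphisms of the smash product reads $H_c(\delta)Y(f)=H_b(\gamma)$, that is $Y(f)=H_c(\delta)^{-1}H_b(\gamma)$. Substituting $H_b(\gamma)=\lambda'_H(\gamma)H_b(\alpha_{Xb})$ and $H_c(\delta)=\lambda'_H(\delta)H_c(\alpha_{Xc})$, and using that $\lambda'_H$ is a functor, gives $Y(f)=H_c(\alpha_{Xc})^{-1}\lambda'_H(\delta)^{-1}\lambda'_H(\gamma)H_b(\alpha_{Xb})=H_c(\alpha_{Xc})^{-1}\lambda'_H(\delta^{-1}\gamma)H_b(\alpha_{Xb})$. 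Since $\delta^{-1}\gamma=X(f)$, this is exactly the asserted formula.

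The computation is essentially bookkeeping, so the only real content sits in the equivariance step, where the functorial identity $H\beta=\lambda_H(\beta)H$ is evaluated at a specific object, and in the identification $\lambda_H(\gamma\alpha_{Xb}^{-1})=\lambda'_H(\gamma)$. I expect the main obstacle to be purely notational: keeping the source and target conventions consistent so that every composite ($\gamma\alpha_{Xb}^{-1}$, $\delta^{-1}\gamma$, and the three-fold products) genuinely lives in the appropriate hom-set, and so that the functoriality of $\lambda'_H$ is legitimately applied to $\delta^{-1}\gamma=X(f)$.
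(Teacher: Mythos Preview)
Your argument is correct and follows essentially the same route as the paper: the key step in both is to write $(b,\gamma)=(\gamma\alpha_{Xb}^{-1})\cdot(b,\alpha_{Xb})$, apply the equivariance $H\beta=\lambda_H(\beta)H$ from Corollary~\ref{lambda}, and then recognise $\lambda_H(\gamma\alpha_{Xb}^{-1})=\lambda'_H(\gamma)$ via $\alpha_{x_0}=1$. The only difference is one of detail: you spell out why $H_b$ is well defined and why $H(f)=f$ (from $F_YH=F_X$), and you carry out explicitly the ``direct computation'' of $Y(f)$ that the paper merely asserts, using $Y(f)=H_c(\delta)^{-1}H_b(\gamma)$ together with the functoriality of $\lambda'_H$ on the composable pair $\delta^{-1}\gamma=X(f)$.
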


\begin{proof}
By definition of $\lambda_H$ we have that
$$(b, H_b(\gamma))= H(b, \gamma) = H(b, \gamma\alpha_{_{Xb}}^{-1}\alpha_{_{Xb}}) = \lambda_H(\gamma\alpha_{_{Xb}}^{-1})H(b,\alpha_{_{Xb}})=$$
$$\lambda'_H(\gamma)\left(b,H_b\left(\alpha_{_{Xb}}\right)\right).$$
The formula for $Y(f)$ can also be seen by direct computation.\qed
\end{proof}

Finally we note that the smash product defined above is an instance of a pullback. Consider the \textbf{category over} $x_0$ (also called the slice category or category of objects over $x_0$), denoted ${}_{x_0}\G$: its objects are those morphisms $\alpha$ of $\G$ with target $x_0$.  A morphism from $\alpha$ to $\gamma$ is $\delta\in {}_{s(\alpha)}\G_{s(\gamma)}$ such that $\gamma\delta=\alpha$.  Since $\G$ is a groupoid, there is exactly one morphism $\gamma^{-1}\alpha$ from $\alpha$ to $\gamma$ in the star of $x_0$, so that ${}_{x_0}\G$ is a connected trivial groupoid.   Clearly ${}_{x_0}\G = \G\#_{x_0}\id$ and there is a Galois covering ${}_{x_0}\G\to\G$. The proof of the following is straightforward.

\begin{pro}
Let $\B$ be a category and $X:\B\to\G$ be a grading. The categories $\B \times_\G \left({}_{x_0}\G\right)$ and $\B\#_{x_0}X$ are isomorphic.
\end{pro}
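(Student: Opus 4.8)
The plan is to write down an explicit isomorphism $\Phi:\B\#_{x_0}X\to\B\times_\G\left({}_{x_0}\G\right)$ and verify it is a bijective functor by unwinding the two constructions. First I would make explicit the canonical Galois covering $G:{}_{x_0}\G\to\G$ along which the pullback is taken: it is the source functor, sending an object $\alpha$ of ${}_{x_0}\G$ (that is, a morphism of $\G$ with target $x_0$) to $s(\alpha)$, and sending the unique slice morphism between two objects to its underlying morphism of $\G$. With this description, the objects of $\B\times_\G\left({}_{x_0}\G\right)$ are the pairs $(b,\alpha)$ with $b\in\B_0$ and $\alpha$ an object of ${}_{x_0}\G$ satisfying $X(b)=G(\alpha)=s(\alpha)$; that is, $\alpha\in{}_{x_0}\G_{Xb}$. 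These are precisely the objects of $\B\#_{x_0}X$, so I would take $\Phi$ to be the identity on objects.

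On morphisms I would argue as follows. A morphism of $\B\#_{x_0}X$ from $(b,\gamma)$ to $(c,\delta)$ is a morphism $f\in{}_c\B_b$ with $\delta X(f)=\gamma$. Because ${}_{x_0}\G$ is a connected trivial groupoid, there is exactly one morphism $\sigma$ from $\gamma$ to $\delta$ in ${}_{x_0}\G$, namely the one whose underlying morphism is $G(\sigma)=\delta^{-1}\gamma:Xb\to Xc$. I would send $f$ to the pair $(f,\sigma)$. The smash-product relation $\delta X(f)=\gamma$ rearranges (multiplying on the left by $\delta^{-1}$ in the groupoid $\G$) to $X(f)=\delta^{-1}\gamma=G(\sigma)$, which is exactly the compatibility condition demanded of a morphism of the pullback. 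Hence $(f,\sigma)$ is a genuine morphism of $\B\times_\G\left({}_{x_0}\G\right)$ and $\Phi$ is well defined.

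It then remains to check that $\Phi$ is functorial and bijective, both of which I expect to be routine. Functoriality holds because the first coordinate of a composite in the pullback is the composite of first coordinates, which matches composition of smash-product morphisms, while the second coordinates are automatically compatible by the uniqueness of slice morphisms. For bijectivity the triviality of ${}_{x_0}\G$ does the essential work: the assignment $(f,\sigma)\mapsto f$ is a two-sided inverse, since $\sigma$ is uniquely determined by its source and target objects $\gamma,\delta$, and the pullback condition $X(f)=G(\sigma)$ is literally the smash-product condition $\delta X(f)=\gamma$. I do not anticipate any genuine obstacle here; the only points requiring care are the bookkeeping of the composition convention in $\G$ (so that $\delta X(f)=\gamma$ does indeed become $X(f)=\delta^{-1}\gamma$) and the observation that the second coordinate of a pullback morphism is redundant precisely because the slice groupoid ${}_{x_0}\G$ is simply connected.
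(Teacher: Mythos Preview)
Your proposal is correct and is exactly the straightforward verification the paper has in mind; the paper itself gives no proof beyond the remark that it is straightforward, and your explicit unwinding of the two constructions --- identifying objects literally and observing that the second coordinate of a pullback morphism is forced to be $X(f)$ by the triviality of the slice groupoid --- is the natural way to carry this out.
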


\section{Effective gradings and pointed Galois coverings}\label{effective}

In this section we will prove that the category of effective gradings of a category $\B$ and the category of pointed Galois coverings of $\B$ are equivalent. First we show how to obtain an effective grading of $\B$ from a Galois covering of $\B$ provided with a choice of an object in each fibre.

Let $F:\C\to \B$ be a Galois covering of a category $\B$.  In other words, according to Theorem \ref{galois are quotients}, we consider a group $\Gamma$ acting freely on $\C$ and let $\B$ be the orbit category $\C/\Gamma$.  The orbit of an object $x\in \C_0$ is denoted $[x]$ and likewise for morphisms.

In order to define a grading we consider an \textbf{object-section} of the Galois covering as follows: for each object orbit $\omega$ we choose a representative $\omega_0\in\omega$, or equivalently an object $\omega_0$ in the $F$-fibre of $\omega$.  Associated to this object-section there is a deviation map $d:\C_0\to \Gamma$ determined by
$$x=d(x)[x]_0$$
which is well defined since the action of $\Gamma$ on objects is free.  Moreover $d$ is a map of $\Gamma$-sets, so that $d(\gamma x)=\gamma d(x)$ for each $\gamma\in \Gamma$ and $x\in \C_0$. Any other object-section is of the form $[x]_1=u_{[x]} [x]_0$ for a unique $u_{[x]}\in \Gamma$; the corresponding deviation $d_1$ is given by $d_1(x)=d(x)u_{[x]}$.

\begin{defi}\label{associated grading}
Let $\C$ be a connected category and let $\Gamma$ be a group acting freely on $\C$. Let $\G$ be the groupoid whose objects are $\left(\C/\Gamma\right)_0$ and having a copy of $\Gamma$ as the set of morphisms between any two objects. Consider an object-section of the Galois covering $\C\to\C/\Gamma$ and let $d$ be the corresponding deviation. The \textbf{associated grading} $X:\C/\Gamma\to\G$ is the identity on objects (\emph{i.e}. on $\Gamma$-orbits).  For an orbit morphism $[f]$ from $[x]$ to $[y]$ let $f'$ be a representative and let $x'$ and $y'$ be its source and target objects in $\C$.  We define $X([f])=d(y')^{-1}d(x')$.
\end{defi}
Note that $X$ is indeed a functor and that $X([f])$ does not depend on the choice  of the morphism representative $f'$.

\begin{rem}
For another object-section of the covering with deviation $d'(x)=d(x)u_{[x]}$ the corresponding grading $X'$ is given by  $$X'\left({}_y\!f_x\right) = u_{[y]}^{-1}X([f])u_{[x]}.$$
\end{rem}

\begin{lem}
The above grading is effective.
\end{lem}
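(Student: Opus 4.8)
The plan is to unpack the definition of effective grading and verify its three requirements for the associated grading $X:\C/\Gamma\to\G$ of Definition \ref{associated grading}. Recall that $X$ is effective if (i) $\G$ is connected, (ii) $X$ is bijective on objects, and (iii) the induced functor $QX:Q(\C/\Gamma)\to\G$ is full. The first two are essentially immediate from the construction: $\G$ has a copy of the whole group $\Gamma$ as morphisms between any two objects, so between distinct objects there are (many) morphisms and $\G$ is connected; and $X$ is the identity on $\Gamma$-orbits by definition, hence bijective on objects. So the real content is fullness of $QX$.

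For fullness, I would first observe that $\G$ is a groupoid (every element of $\Gamma$ is invertible, and between any two objects morphisms compose via group multiplication), so $X$ factors uniquely through $Q(\C/\Gamma)$ giving $QX$ by the universal property of the category of fractions. To prove $QX$ is full, I need to show that every morphism of $\G$ from $[x]$ to $[y]$ -- that is, every group element $\gamma\in\Gamma$ thought of as a morphism between these objects -- lies in the image of $QX$. The key idea is to exploit connectedness of $\C$ together with the transitivity of the $\Gamma$-action on fibres. Concretely, I would fix the chosen representatives $[x]_0$ and $[y]_0$; since $\C$ is connected there is a walk in $\C$ from $[x]_0$ to $[y]_0$, which projects to a walk in $\C/\Gamma$ and hence determines a morphism $w$ of $Q(\C/\Gamma)$ from $[x]$ to $[y]$. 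Computing $QX(w)$ via the telescoping formula $X([f])=d(t)^{-1}d(s)$ along the walk, the intermediate deviation terms cancel, leaving $d([y]_0)^{-1}d([x]_0)$. Since the representatives satisfy $[x]_0=d([x]_0)[x]_0$ by definition of the deviation, this evaluates to the identity element of $\Gamma$, i.e. the distinguished morphism $[x]\to[y]$ corresponding to $1\in\Gamma$.

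Having hit one morphism $[x]\to[y]$ in the image, I would then obtain all of them by composing with loops at a base object. That is, it suffices to show $QX$ surjects onto the vertex group ${}_{[x]}\G_{[x]}=\Gamma$ for a single object $[x]$, since $\G$ is a connected groupoid and composing the already-found morphism with automorphisms covers every hom-set. To realize an arbitrary $\gamma\in\Gamma$ as the $QX$-image of a loop at $[x]$, I would use that $[x]_0$ and $\gamma[x]_0$ lie in the same $F$-fibre and connectedness of $\C$ supplies a walk between them; its projection is a loop at $[x]$ in $\C/\Gamma$, and the same telescoping computation gives $QX$ of this loop equal to $d(\gamma[x]_0)^{-1}d([x]_0)$. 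Here I invoke the equivariance $d(\gamma z)=\gamma d(z)$ from the discussion preceding the definition, which yields $d(\gamma[x]_0)=\gamma d([x]_0)=\gamma$, so the loop maps to $\gamma^{-1}$ (and varying $\gamma$ over $\Gamma$ exhausts the group).

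The main obstacle, and the step deserving the most care, is the bookkeeping in the telescoping computation: one must correctly handle walks that mix genuine morphisms of $\C/\Gamma$ with their formal inverses (the ``reversed'' arrows appearing in walks), checking that $X$ extends compatibly along inverses -- which is automatic because $QX$ lands in a groupoid and respects inversion -- and that the deviation factors cancel in sequence regardless of arrow orientation. I expect everything else to be routine verification directly from the definitions and from the equivariance property $d(\gamma x)=\gamma d(x)$, together with freeness of the action which guarantees the deviation map is well defined.
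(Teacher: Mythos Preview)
Your argument is correct and rests on the same core idea as the paper's: use connectedness of $\C$ to produce a walk from a chosen representative $[x]_0$ to a suitably $\Gamma$-translated representative, project it to $\C/\Gamma$, and compute its $QX$-image via the telescoping formula $d(\text{target})^{-1}d(\text{source})$ together with the equivariance $d(\gamma z)=\gamma d(z)$.

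The paper organizes this a little more economically. Rather than working with walks and splitting into two steps (first hitting the identity morphism $[x]\to[y]$, then covering the vertex group), it observes that $QX$ is itself the associated grading of the Galois covering $QF:Q\C\to Q\C/\Gamma$, thereby reducing to the case where $\C$ is already a groupoid. Once $\C$ is a groupoid, one can pick in a single stroke an actual morphism $f$ from $[x]_0$ to $\gamma^{-1}[y]_0$ and read off $X([f])=d(\gamma^{-1}[y]_0)^{-1}d([x]_0)=\gamma$. Your two-step walk argument achieves exactly the same thing without that preliminary reduction; the paper's route is shorter, while yours makes the telescoping mechanism more explicit and avoids invoking Theorem~\ref{Qgalois}.
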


\begin{proof}
We need to show that $QX$ is full.  It is easy to check that $QX$ is the grading associated to the Galois covering $QF:Q\C\to Q(\C/\Gamma)=Q\C/\Gamma$, so we reduce to the case where $\C$ is a groupoid.

Let $\gamma$ be a morphism from $[x]$ to $[y]$ in $\G$. Since $\C$ is a connected groupoid, there is a morphism $f$ from $[x]_0$ to $\gamma^{-1}[y]_0$ in $\C$. Clearly $X([f])=\gamma$. \qed
\end{proof}

\begin{rem}
An effective grading has only one endomorphism, while a Galois covering can have many.  Consequently $\galcov_\B$ and $\cat{EffGrad}_\B$ are not equivalent categories. On the other hand, the category of \emph{pointed} Galois coverings is sharp like the category of effective gradings .
\end{rem}

\begin{thm}\label{EffGrad equiv GalCov}
Let $\B$ be a connected category and let $b_0$ be an object of $\B$.  The categories $\cat{EffGrad}_\B$ and $\galcov(\B,b_0)$ are equivalent.
\end{thm}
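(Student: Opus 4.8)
The plan is to construct a pair of mutually quasi-inverse functors between $\cat{EffGrad}_\B$ and $\galcov(\B,b_0)$ and to verify that they are inverse up to natural isomorphism. I would define a functor $\Phi:\cat{EffGrad}_\B\to\galcov(\B,b_0)$ sending an effective grading $X:\B\to\G$ to the smash product covering $(F_X,(b_0,1))$, where the base-point of the fibre is the object $(b_0,1)$ with $1=\id_{x_0}$ (recalling that since $X$ is bijective on objects I may identify the distinguished object $x_0\in\G_0$ with $Xb_0$; concretely I take $x_0=Xb_0$). By the Proposition preceding this theorem, $F_X$ is a Galois covering with automorphism group ${}_{x_0}\G_{x_0}$, and by the earlier lemma it is connected because $X$ is effective, so $(F_X,(b_0,1))$ is a genuine object of $\galcov(\B,b_0)$. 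On morphisms, a map of gradings $Z:X\to Y$ is by Lemma \ref{sharp gradings} the unique projection functor, and one checks that it induces a well-defined pointed map of coverings; here Proposition \ref{morphisms smashes} tells me exactly what such a morphism must look like, which is why the assignment on morphisms is forced and functorial.

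In the other direction I would define $\Psi:\galcov(\B,b_0)\to\cat{EffGrad}_\B$ using the \textbf{associated grading} machinery developed just before the theorem. Given a pointed Galois covering $(F,c):\C\to\B$, I use Theorem \ref{galois are quotients} to write $\B\iso\C/\aut F$ with $\Gamma=\aut F$ acting freely, and I choose the object-section so that the distinguished representative of the fibre over $b_0$ is the chosen point $c$ itself (this is the crucial use of the base-point: it rigidifies the otherwise arbitrary object-section). The associated grading $X:\C/\Gamma\to\G$ of Definition \ref{associated grading} is effective by the Lemma immediately above, giving an object of $\cat{EffGrad}_\B$. For a pointed map $H:(F,c)\to(G,d)$, Corollary \ref{lambda} supplies the canonical surjection $\lambda_H:\Gamma\to\Gamma'$, and since both categories are sharp (no parallel morphisms) I only need to produce \emph{some} map of gradings and check it is compatible; the content of Lemma \ref{sharp gradings} reduces this to an inclusion of kernel subgroupoids, which $\lambda_H$ provides.

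The two remaining verifications are that $\Phi$ and $\Psi$ are mutually quasi-inverse. For $\Psi\Phi\iso\id$ I start from an effective grading $X$, form $F_X$ with its canonical object-section (taking each distinguished fibre representative to be $(b,\alpha_{Xb})$ for the chosen morphisms $\alpha_x$, with $\alpha_{x_0}=1$ so that the point $(b_0,1)$ is distinguished), and compute the associated grading directly from Definition \ref{associated grading}: the deviation map reads off the ${}_{x_0}\G_{x_0}$-component, and the resulting grading is isomorphic to $X$ — this is essentially Proposition \ref{universal grading} and the identification $\G\iso Q\B/\ker(QX)$ recorded before the theorem, once I corestrict to the image to dispose of the (harmless) difference between $\G$ and $\im(QX)$. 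For $\Phi\Psi\iso\id$ I start from a pointed Galois covering, build the associated grading $X$ using $c$ as base representative, and exhibit an isomorphism of pointed coverings $\C\iso\B\#_{x_0}X$ sending $x\mapsto([x],d(x))$ and $c\mapsto(b_0,1)$; that this is a well-defined isomorphism of coverings is where the definition of the deviation $d$ via $x=d(x)[x]_0$ pays off directly. The main obstacle will be this last naturality check: I must confirm the isomorphisms are natural in $H$ and independent of the auxiliary choices (the coset representatives $A$, the morphisms $\alpha_x$, and the object-section), and the cleanest way to sidestep the bookkeeping is to invoke sharpness — since both $\galcov(\B,b_0)$ and $\cat{EffGrad}_\B$ are sharp by Proposition \ref{equal} and Lemma \ref{sharp gradings}, any two morphisms with the same source and target agree, so naturality squares commute automatically and I only need the object-level bijections on isomorphism classes together with the fact that $\Phi$ and $\Psi$ preserve and reflect the (unique) morphisms. \qed
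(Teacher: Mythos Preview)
Your proposal is correct and uses the same essential ingredients as the paper: the smash product functor $X\mapsto (\B\#_{Xb_0}X,(b_0,1))$ in one direction, the associated grading of Definition~\ref{associated grading} in the other, and sharpness of both categories to finesse the morphism-level checks. The organizational difference is that the paper proves only that the single functor $\#_{b_0}$ is essentially surjective, full and faithful, whereas you build an explicit quasi-inverse $\Psi$; this costs you an extra verification ($\Psi\Phi\iso\id$) that the paper never needs, and your justification of that step via Proposition~\ref{universal grading} is not quite the right reference --- what is actually required is the direct computation that the associated grading of $F_X$ recovers $X$ up to the conjugation isomorphism $\gamma\mapsto\alpha_{Xc}\,\gamma\,\alpha_{Xb}^{-1}$.

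The one place both arguments need genuine work is showing that a pointed map $H$ of coverings yields a map of the associated gradings (your functoriality of $\Psi$; the paper's fullness of $\#_{b_0}$). Your phrase ``$\lambda_H$ provides'' the kernel inclusion is correct but compressed: it works cleanly only if the object-section for $G$ is taken to be $H$ applied to the object-section for $F$ (possible because $Hc=d$), in which case one computes directly $Y([f])=\lambda_H(X([f]))$; for arbitrary object-sections one must then invoke that isomorphic gradings have equal kernels. The paper handles this same point differently, staying inside the smash-product model and using the explicit formula of Proposition~\ref{morphisms smashes} to extend to $Q\B$ and read off $\ker(QX)\subseteq\ker(QY)$. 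Your route via compatible object-sections is arguably more transparent once spelled out; the paper's route avoids making $\Psi$ a functor at all.
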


\begin{proof}
We define a functor $\#_{b_0}:\cat{EffGrad}_\B\to \galcov(\B,b_0)$ as follows.  Given an effective grading $X$ of $\B$, the associated Galois covering is $\B\#_{Xb_0}X$.  We provide it with the object in the fibre of $b_0$ given by the identity endomorphism $1_{Xb_0}$ of $Xb_0$.  A map $Z$ of gradings from $X$ to $Y$ clearly induces a map of coverings $\#_{b_0}(Z):\B\#_{Xb_0}X\to\B\#_{Yb_0}Y$.  We will show that $\#_{b_0}$ is essentially surjective and fully faithful.

Let $F: \C \to \C/\Gamma$ be a Galois covering with automorphism group $\Gamma$ acting freely on $\C$.  As in Definition \ref{associated grading} choose an object-section and let $X:\C/\Gamma\to \G$ be the associated grading of $\C/\Gamma$ with corresponding deviation map $d$,  and choose a base-object $[q]$ of $\G$.

We assert that the smash product $\left(\C/\Gamma\right) \#_{[q]} X$ is isomorphic as a Galois covering to $\C$.  Consider an object $([x],\gamma)$ of the smash product and set $\gamma[x]_0$ to be the corresponding object of $\C$.  Consider a morphism $[f]$ of the smash product with source $([x],\gamma)$ and target $([y],\delta)$ (so that $X([f])=\delta^{-1}\gamma$).  We would like to associate to $[f]$ the representative $f'$ having source $\gamma [x]_0$.  To do this we must check that the target of $f'$ is $\delta[y]_0$.  Write the target of $f'$ as $\beta[y]_0$ for some $\beta\in \Gamma$.  Then
$$\delta^{-1}\gamma=X([f])=X([f'])=\beta^{-1}\gamma,$$
implying that $\beta=\delta$ as required.   It is now straightforward to check that the maps $([x],\gamma)\mapsto \gamma[x]_0$ and $[f]\mapsto f'$ define an isomorphism of pointed Galois coverings.

That $\#_{b_0}$ is faithful is immediate, since we know by Lemma \ref{sharp gradings} that $\cat{EffGrad}_\B$ is sharp.

It remains to show that $\#_{b_0}$ is full.  Let $X:\B\to \G$ and $Y:\B\to \mathcal{H}$ be effective gradings of $\B$, let $H:\B\#_{Xb_0}X\to \B\#_{Yb_0}Y$ be a map of coverings and let $\lambda_H$ be the corresponding group homomorphism given in Corollary \ref{lambda}.  Employing the notation from Proposition \ref{morphisms smashes} we have that $H(b,\gamma) = \left(b, \lambda'_H(\gamma) H_b\left(\alpha_{_{Xb}}\right)\right)$.  Since $\galcov(\B,b_0)$ is sharp it suffices to show that there is a map of gradings $X\to Y$, and this clearly follows if there is a map of gradings $QX\to QY$.  By Lemma \ref{sharp gradings} this is equivalent to showing that $\ker(QX)\subseteq \ker(QY)$.  To this end consider the map of coverings $Q\B\#_{Xb_0}QX \to Q\B\#_{Yb_0}QY$ given by $\lambda_H$ as follows: on objects it coincides with $H$, while on a morphism $\phi$ of $Q\B$ from $(b,\gamma)$ to $(c,\delta)$ (so that $QX(\phi)=\delta^{-1}\gamma$) its value is $\phi$ considered as a morphism from $(b,\lambda'(\gamma)H_b(\alpha_{_{Xb}}))$ to $(c,\lambda'(\delta)H_c(\alpha_{_{Xc}}))$.
This makes sense if and only if 
$$(QY)(\phi) = H_c\left(\alpha_{_{Xc}}\right)^{-1} \ \lambda_H'\left((QX)(\phi)\right) \ H_b\left(\alpha_{_{Xb}}\right).$$

By Proposition \ref{morphisms smashes} this formula is valid when restricted to morphisms in $\B$.

Consider now $Y'$ defined this way on $Q\B$, namely $Y'$ is the identity on objects and for $\phi$ a morphism in $Q\B$ from $b$ to $c$ we set

$$(Y')(\phi) = H_c\left(\alpha_{_{Xc}}\right)^{-1} \ \lambda_H'\left((QX)(\phi)\right) \ H_b\left(\alpha_{_{Xb}}\right).$$

This defines a functor $Q\B\to \mathcal{H}$ which coincides with $Y$ on $\B$ (i.e. $Y'Q_{\B}= Y$).  Since $QY$ is the unique functor satisfying $(QY)Q_{\B} = Y$, it follows that $Y'=QY$.

Now if $\beta$ is in $\ker(QX)$ then $\beta$ provides an endomorphism of the object $(b_0, 1_{Xb_0})$, hence its image  $QH(\beta)$ is an endomorphism of the object $(b_0, 1_{Yb_0})$ which means that it belongs to the kernel of $QY$. Note that since $QH$ is a morphism of coverings we have that $QH(\beta)=\beta$.
\qed
\end{proof}

A pointed Galois covering of $\B$ is \textbf{universal} if it covers uniquely every pointed Galois covering of $\B$.

\begin{thm}\label{universal}
Let $\B$ be a connected category, let $b_0$ be a chosen base object of $\B$ and let $Q_\B:\B\to Q\B$ be the canonical functor to the category of fractions.  Then $\B\#_{b_0}Q_{\B}$ with the identity of $b_0$ as given object in the fibre of $b_0$ is the universal Galois covering of $\B$.
\end{thm}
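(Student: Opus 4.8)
The plan is to obtain the universality of $\B\#_{b_0}Q_\B$ for free from two results already in hand, namely the universality of $Q_\B$ among effective gradings (Proposition \ref{universal grading}) and the equivalence $\#_{b_0}:\cat{EffGrad}_\B\to\galcov(\B,b_0)$ (Theorem \ref{EffGrad equiv GalCov}). The guiding observation is that a universal pointed Galois covering is exactly an initial object of $\galcov(\B,b_0)$: to say a pointed Galois covering covers uniquely every pointed Galois covering is to say that there is a unique morphism from it to each object of $\galcov(\B,b_0)$, and by Corollary \ref{lambda} every such morphism is automatically a covering. So it is enough to exhibit the initial object of $\galcov(\B,b_0)$.

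First I would record that $Q_\B$ is the initial object of $\cat{EffGrad}_\B$. Proposition \ref{universal grading} says $Q_\B:\B\to Q\B$ is effective and that every other effective grading is uniquely a quotient of it; since a map of gradings $X\to Y$ is a functor $Z$ with $ZX=Y$, this is precisely the assertion that for every effective grading $Y$ there is a unique map $Q_\B\to Y$, i.e. that $Q_\B$ is initial.

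Next I would transport this along $\#_{b_0}$. Since $\#_{b_0}$ is an equivalence of categories, it carries the initial object to an initial object, so $\#_{b_0}(Q_\B)$ is initial in $\galcov(\B,b_0)$. It then remains only to unwind this image. By the definition of $\#_{b_0}$ in the proof of Theorem \ref{EffGrad equiv GalCov}, one has $\#_{b_0}(X)=\B\#_{Xb_0}X$ pointed at the object $(b_0,1_{Xb_0})$ in the fibre of $b_0$; and because $Q_\B$ is the identity on objects we have $Q_\B b_0=b_0$, whence $\#_{b_0}(Q_\B)=\B\#_{b_0}Q_\B$ pointed at $(b_0,1_{b_0})$. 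This is exactly the covering in the statement, with the identity of $b_0$ as the chosen object in the fibre of $b_0$.

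I expect no real obstacle: all the substance sits in the cited equivalence and in the universality of $Q_\B$ among gradings, and what is left is purely definitional, namely matching the two meanings of ``universal'' and checking that $\#_{b_0}$ produces the base object named in the statement. Should one prefer a self-contained argument, the alternative is to take an arbitrary effective grading $Y$ with its pointed smash product and build the unique covering map out of $\B\#_{b_0}Q_\B$ from the factorization $QY:Q\B\to\mathcal{H}$ of $Y$, but this just reconstructs the relevant case of the equivalence and offers nothing new.
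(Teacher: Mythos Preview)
Your proposal is correct and follows essentially the same approach as the paper: both deduce the result from the universality of $Q_\B$ among effective gradings (Proposition \ref{universal grading}) together with the equivalence of Theorem \ref{EffGrad equiv GalCov}, and both invoke Corollary \ref{lambda} to see that the resulting morphism is itself a covering. The only difference is presentational: you phrase the argument abstractly via ``equivalences preserve initial objects'', while the paper traces the correspondence explicitly for a given Galois covering.
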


\begin{proof}
We already know that between two pointed Galois coverings there is at most one morphism. Let $F:\C\to \B$ be a Galois covering of $\B$, corresponding to an effective grading $X:\B\to \G$.  By the universal property of $Q\B$, we have a map of gradings $QX:Q\B\to \G$.  By Theorem \ref{EffGrad equiv GalCov} this corresponds to a map of coverings $\B\#_{b_0}Q_{\B}\to \C$.  Finally, note that a map of coverings is itself a covering (see proof of Corollary \ref{lambda}).
\qed
\end{proof}

We end this paper with an example which describes the universal cover of the category $\K_E$ defined as follows. There are two objects $x_0$ and $x$, both having only identities as endomorphims.  There are no morphisms from $x_0$ to $x$ while ${}_{x_0}\left(\K_E\right)_{x}$ is an arbitrary non-empty set $E$.  There are no compositions to specify. In case $E$ has two elements the linearisation of $\K_E$ over a field is often called the Kronecker category.

Let $\alpha$ be a chosen element of $E$ and let  $E_\alpha=E\setminus \{\alpha\}$. The following result is  well-known and easy to prove:

\begin{lem}
Let $\F(E_\alpha)$ be the free group on the set $E_\alpha$. The category of fractions $Q=Q\K_E$ is the groupoid with two objects corresponding to the group $\F(E_\alpha)$.

More precisely the group at $x$ is the free group on the set of closed walks $\left(\beta', \alpha\right)_{\beta \in E_\alpha}$ while at $x_0$ the group is free on $\left(\alpha, \beta'\right)_{\beta \in E_\alpha}$. As with any groupoid the morphism set  from $x$ to $x_0$ is ${}_{x_0}Q_{x_0} \alpha$. This provides an identification between ${}_{x_0}Q_{x_0}$ and ${}_{x_0}Q_{x}$.
\end{lem}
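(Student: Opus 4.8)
The plan is to identify $Q=Q\K_E$ with an explicit two-object groupoid $\G$ via the universal property of the category of fractions (Theorem \ref{UP of Q}), and to read the stated vertex groups off that identification. First note that since $E$ is nonempty, $\K_E$ is connected, hence so is $Q$. The only nonidentity morphisms of $\K_E$ are the elements of $E$, and these admit no nontrivial compositions, so the sole relations imposed in passing to $Q$ are the inverting relations $\beta\beta'\sim\id_{x_0}$ and $\beta'\beta\sim\id_x$ for $\beta\in E$. A walk analysis then shows that ${}_x Q_x$ is generated by the classes of the closed walks $(\beta',\gamma)$ with $\beta,\gamma\in E$; since $(\beta',\gamma)=(\beta',\alpha)(\alpha',\gamma)$ and $(\alpha',\alpha)=\id$, it is already generated by the $(\beta',\alpha)$ with $\beta\in E_\alpha$. (Conceptually, $\K_E$ is the free category on the graph with two vertices and edge set $E$, so $Q$ is the free groupoid on that graph, whose vertex group is free of rank $|E|-1=|E_\alpha|$; the content of the lemma is exactly this freeness.)

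To prove freeness I would build the candidate groupoid together with a retraction. Let $\G$ be the connected groupoid with objects $x,x_0$, vertex group ${}_{x_0}\G_{x_0}=\F(E_\alpha)$ free on generators $s_\beta$ ($\beta\in E_\alpha$), and a distinguished isomorphism $a\colon x\to x_0$, so that ${}_{x_0}\G_x=\F(E_\alpha)\,a$ and ${}_x\G_x=a^{-1}\F(E_\alpha)\,a$. Define a functor $X\colon\K_E\to\G$ which is the identity on objects and sends $\alpha\mapsto a$ and $\beta\mapsto s_\beta^{-1}a$ for $\beta\in E_\alpha$. As $\G$ is a groupoid each $X(\beta)$ is invertible, so by Theorem \ref{UP of Q} there is a unique functor $\bar X\colon Q\to\G$ with $\bar X Q_{\K_E}=X$.

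For the inverse, use that $\F(E_\alpha)$ is free: define $\Psi\colon\G\to Q$ to be the identity on objects, to send $a\mapsto Q_{\K_E}(\alpha)$, and to send each free generator $s_\beta\mapsto Q_{\K_E}(\alpha)Q_{\K_E}(\beta)^{-1}\in{}_{x_0}Q_{x_0}$. This is a legitimate definition of a functor precisely because ${}_{x_0}\G_{x_0}$ is free (so its generators may be sent anywhere) and $a$ may be sent to any isomorphism $x\to x_0$ of the connected groupoid $Q$. One then checks $\bar X\Psi=\id_\G$ by evaluating on the generating data $a$ and $s_\beta$ of $\G$, and $\Psi\bar X=\id_Q$ via the uniqueness clause of Theorem \ref{UP of Q}, for which it suffices to verify $\Psi X=Q_{\K_E}$ on the morphisms $\alpha$ and $\beta\in E$. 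Thus $\bar X$ is an isomorphism of groupoids: $Q$ is connected on two objects, and tracing the generators through $\bar X$ gives $(\beta',\alpha)\mapsto a^{-1}s_\beta a$ and $(\alpha,\beta')\mapsto s_\beta$, exhibiting ${}_x Q_x$ and ${}_{x_0}Q_{x_0}$ as free on the indicated closed walks; the final identification ${}_{x_0}Q_{x_0}\cong{}_{x_0}Q_x$ is the general groupoid fact $g\mapsto g\,Q_{\K_E}(\alpha)$. The only real obstacle is the freeness assertion, and it is exactly the existence of the well-defined retraction $\Psi$ — resting on the genuine freeness of $\F(E_\alpha)$ — that rules out hidden relations among the generators.
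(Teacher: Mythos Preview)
The paper does not actually prove this lemma: it is introduced with ``The following result is well-known and easy to prove'' and stated without proof. Your argument is correct and supplies what the paper omits. The universal-property construction of the pair $(\bar X,\Psi)$ is clean, and your verifications $\bar X\Psi=\id_\G$ (on the generating data $a,s_\beta$) and $\Psi\bar X=\id_Q$ (via uniqueness in Theorem~\ref{UP of Q}, reducing to $\Psi X=Q_{\K_E}$ on $\alpha$ and each $\beta$) go through exactly as you say; the computation $\bar X(\beta',\alpha)=a^{-1}s_\beta a$ and $\bar X(\alpha,\beta')=s_\beta$ then reads off the free generating sets at $x$ and $x_0$.

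The ``well-known'' argument the paper alludes to is presumably the one you mention parenthetically: $\K_E$ is the free category on the graph with two vertices and edge set $E$, so $Q\K_E$ is the free groupoid on that graph, whose vertex group is the fundamental group of the underlying graph, free of rank $|E|-1$ with generators coming from edges outside a spanning tree (here the single edge $\alpha$). Your explicit isomorphism via the universal property is essentially a hands-on implementation of that topological fact, and has the advantage of staying entirely within the paper's framework.
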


Recall that the \textbf{Cayley graph} of a group $G$ with respect to a given set of generators $S$ has vertices the elements of the group and an arrow from $g$ to $gs$ for every $g\in G, s\in S$.

\begin{defi}
The \textbf{double} of a graph $C$ has vertices  $\{x,x_0\} \times C_0 $ (we think of two copies of $C$ labeled by $x$ and $x_0$ sitting side-by-side).  For each arrow $a$ in $C_1$ there is an arrow in the double graph from $(x,s(a))$ to $(x_0,t(a))$ (we think of arrows as moving left to right). Moreover there are arrows from $(x,y)$ to $(x_0,y)$ for every object $y$ (in our picture, these arrows are horizontal).
\end{defi}

\begin{pro}
The universal cover of $\K_E$ (that is, the smash product  $\K_E\#_{x_0}Q$) is the category given by the double of the Cayley graph of the free group $\F_{E_\alpha}$ at $x_0$ with respect to set of generators given above.
\end{pro}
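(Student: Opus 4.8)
The plan is to unwind the definition of the smash product in the special case $\B=\K_E$, $\G=Q$, and grading $X=Q_{\K_E}$, and then observe that the whole statement reduces to a combinatorial identification of underlying graphs. Two features of $\K_E$ make this work: its only non-identity morphisms are the elements of $E$, all going from $x$ to $x_0$, and no two of these are composable (composing would force $x=x_0$). Consequently neither $\K_E$ nor its smash product has any non-trivial composition, so each is the free category on its underlying graph; the same holds for the double of any graph, since there every arrow has source of type $x$ and target of type $x_0$. Thus it suffices to produce compatible bijections of objects and of arrows. Throughout I would write $a=Q(\alpha)\in{}_{x_0}Q_x$ and, for $\beta\in E_\alpha$, $s_\beta=Q(\alpha)Q(\beta)^{-1}\in{}_{x_0}Q_{x_0}$ for the free generators supplied by the preceding Lemma, so that $Q(\beta)=s_\beta^{-1}a$.

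First I would match the objects. Since $X=Q_{\K_E}$ is the identity on objects, an object of $\K_E\#_{x_0}Q$ is either $(x_0,\gamma)$ with $\gamma\in{}_{x_0}Q_{x_0}=\F(E_\alpha)$ or $(x,\gamma)$ with $\gamma\in{}_{x_0}Q_x$. Using the Lemma's identification ${}_{x_0}Q_x={}_{x_0}Q_{x_0}\,a$, every such $\gamma$ is uniquely $g\,a$ with $g\in\F(E_\alpha)$; sending $(x,\gamma)\mapsto(x,\gamma a^{-1})$ and fixing the $x_0$-objects identifies the object set with $\{x,x_0\}\times\F(E_\alpha)$, which is precisely the vertex set of the double of the Cayley graph of $\F(E_\alpha)$.

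Next I would match the arrows. A non-identity morphism of the smash product is some $\beta\in E$, and specifying its target $(x_0,\delta)$ forces its source to be $(x,\delta\,Q(\beta))$, since the defining condition reads $\delta\,X(\beta)=\gamma$. Under the identification above, this source is indexed by $\delta\,Q(\beta)a^{-1}$. For $\beta=\alpha$ the index is $\delta$, so $\alpha$ yields the horizontal arrow $(x,\delta)\to(x_0,\delta)$; for $\beta\in E_\alpha$ it is $\delta s_\beta^{-1}$, so writing $h=\delta s_\beta^{-1}$ the arrow runs $(x,h)\to(x_0,h s_\beta)$, which is exactly the double-arrow coming from the Cayley edge $h\to h s_\beta$. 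This places the arrows of the two categories in bijection compatibly with source and target, and since both categories are free on these graphs, the bijection extends to an isomorphism of categories.

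The routine part is verifying that the bijection respects identities and the (only, trivial) compositions. The one place that demands care — and the step most likely to hide an inversion error — is the bookkeeping of the conventions from the Lemma: that walks are read right-to-left, that $\beta'$ contributes $Q(\beta)^{-1}$, that $\F(E_\alpha)={}_{x_0}Q_{x_0}$ acts on ${}_{x_0}Q_x$ on the left, and that the generator $s_\beta=Q(\alpha)Q(\beta)^{-1}$ is matched with the correctly oriented Cayley edge. Getting these mutually consistent is precisely what makes $\alpha$ produce the horizontal arrows and each $\beta\in E_\alpha$ produce the Cayley edge in the intended direction.
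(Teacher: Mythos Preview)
Your proposal is correct and follows essentially the same approach as the paper: both proofs unwind the smash product $\K_E\#_{x_0}Q$, list its objects as the two families $(x_0,\gamma)$ and $(x,\delta)$, read off that each $\beta\in E$ gives a morphism $(x,\delta)\to(x_0,\delta\,Q(\beta)^{-1})$, and then invoke the identification ${}_{x_0}Q_x\cong{}_{x_0}Q_{x_0}$ from the preceding Lemma to recognise the double of the Cayley graph. Your version is simply more explicit --- you spell out the free-category observation (which the paper relegates to a Remark) and track the generator $s_\beta=Q(\alpha)Q(\beta)^{-1}$ through the identification so as to verify the Cayley-edge orientation, whereas the paper leaves that computation to the reader.
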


\begin{rem}
Note that there are no concatenated arrows in the double of a graph.  Consequently we need not specify a composition law in order to define the category, apart from trivial compositions with identities.
\end{rem}

\begin{proof}
The objects of the smash product are of two kinds : $(x_0, \gamma)$ with $\gamma\in {}_{x_0}Q_{x_0}$ and $(x,\delta)$ with $\delta\in {}_{x_0}Q_{x}$. Each morphism in $K_E$ (namely each $\beta\in E$) provides a morphism from $(x,\delta)$ to $(x_0, \delta\beta^{-1})$. Through the identification above between ${}_{x_0}Q_{x_0}$ and ${}_{x_0}Q_{x}$ we obtain precisely the double of the Cayley graph.\qed
\end{proof}

In case the set $E$ has two elements the category is

$$
\xymatrix{
_{x}\bullet  \ar@/^/[r]^{\alpha}\ar@/_/[r]_{\beta} & \bullet_{x_0}}
$$

The free group involved has one generator $\alpha\beta'$ and the double of its Cayley graph is as follows - note that as above each arrow provides right multiplication by the inverse of its label on the second component:

 $$
\xymatrix{
&&& \\
(x,(\alpha\beta')\alpha)&\bullet  \ar[r]^{\alpha}\ar[ur] & \bullet &(x_0,\alpha\beta')  \\
(x,\alpha)&\bullet  \ar[r]^{\alpha}\ar[ur]^{\beta} & \bullet &(x_0,\id)  \\
(x,\beta)&\bullet  \ar[r]^{\alpha}\ar[ur]^{\beta} & \bullet &(x_0,(\alpha\beta')^{-1})  \\
(x,(\alpha\beta')^{-2}\alpha)&\bullet  \ar[r]^{\alpha}\ar[ur]^{\beta} & \bullet &(x_0,(\alpha\beta')^{-2})  \\
&\ar[ur]&&}
$$


\footnotesize \noindent C.C.: \\Institut de math\'{e}matiques et de mod\'{e}lisation de Montpellier I3M,\\ UMR 5149\\ Universit\'{e}  Montpellier 2, \\F-34095 Montpellier cedex 5, France.\\ {\tt Claude.Cibils@math.univ-montp2.fr}
\smallskip

\noindent J.MacQ.: \\ Department of mathematics, \\ University Walk,\\ University of Bristol,\\Bristol, England, BS81TW. \\ {\tt John.MacQuarrie@bristol.ac.uk}

 \end{document}